\newtheorem{theorem}{Theorem}
\newtheorem{proposition}[theorem]{Proposition}
\newtheorem{lemma}[theorem]{Lemma}
\theoremstyle{definition}
\newtheorem{definition}[theorem]{Definition}
\newtheorem{example}[theorem]{Example}
\newtheorem{construction}[theorem]{Construction}
\newtheorem{remark}[theorem]{Remark}
\newtheorem{algorithm}[theorem]{Algorithm}
\newcommand{\lowerstar}{\mathbin{\underline\star}}
\newcommand{\upperstar}{\mathbin{\overline\star}}
\newcommand{\im}{\operatorname{im}}
\newcommand{\Mixt}{\operatorname{Mixt}}
\title[Markov Combinations of Discrete Statistical Models]{Markov Combinations of\\Discrete Statistical Models}
\author{Orlando Marigliano, Eva Riccomagno}
\date{\today}
\begin{document}
\begin{abstract}
Markov combination is an operation that takes two statistical models and produces a third whose marginal distributions include those of the original models. Building upon and extending existing work in the Gaussian case, we develop Markov combinations for categorical variables and their statistical models. We present several variants of this operation, both algorithmically and from a sampling perspective, and discuss relevant examples and theoretical properties. We describe Markov combinations for special models such as regular exponential families, discrete copulas, and staged trees. Finally, we offer results about model invariance and the maximum likelihood estimation of Markov combinations.
\end{abstract}

\maketitle

\section{Introduction}

This paper describes a method for combining two statistical models to obtain a third, a concept with
both theoretical and practical implications.
From a theoretical point of view, such an operation can be analysed for its algebraic properties such as associativity, commutativity, and unit elements. In addition,  statistical properties of the original models may transfer to their combination. Known models may be representable as a sequence of combinations of simpler models, and the combination of models of the same type may result in a model of that type.

From a data analysis perspective, this model-based approach offers an alternative to traditional methods for merging evidence from different sources such as meta-analysis, data fusion and data integration. Combining  two statistical models for different datasets produces a model for the combined dataset in a way that conforms to the original models, as detailed in Section~\ref{sec:definitions}.  There it is shown that sampling from the combined model entails first sampling from one model and then from the second model within the same meta-category of the first sample. Different combination variants provide variations of this basic structure.

The operation, known as \emph{Markov combination}, acts on probability density functions. It was first introduced in~\cite{dawid-lauritzen} to join two graphical
models along a common clique and later generalised in~\cite{massa-lauritzen} and~\cite{massa-riccomagno} for a finite set of Gaussian random variables. Markov combinations also form the basis for the notion of Markov melding~\cite{markov-melding}. In this paper the concept is focused on the discrete categorical case, to allow for the combination of discrete models.

The basic setup involves a discrete random variable defined on a finite set $I$, which represents the set of possible values, categories, or levels that the variable can take. The case of multiple random variables can be reduced to this form by mapping the level sets to a suitable $I$. The order of the elements in $I$ is not relevant for our work.  The overall setup is similar to that used in~\cite{relational-models},~\cite{pistone-riccomagno}, and~\cite{diaconis-sturmfels}.

The paper is structured as follows.

Section~\ref{sec:definitions} defines the operation of meta-categorisation  on 
a finite set $I$, the basic operation of Markov combination of statistical models, and several variants of the latter. These operations are  motivated and interpreted in terms of sampling. Section~\ref{sec:examples} discusses examples and special cases and shows that discrete Markov combinations generalise both independent Cartesian products and the Markov combinations defined in~\cite{dawid-lauritzen}. Section~\ref{sec:properties} examines the theoretical properties of Markov combinations, focusing on associativity and the formulation of mixture models as chains of combinations and marginals.

Section~\ref{sec:models} describes Markov combinations for specific  classes of models, namely discrete copulas~\cite{discrete-copulas}, regular exponential families (saturated models), and staged trees~\cite{goergen-bigatti}. Section~\ref{sec:theorems} presents theoretical results,
including model invariance and maximum likelihood estimation for Markov combinations of saturated models. Section~\ref{sec:algorithms} provides algorithms for computing density functions and sampling. Section~\ref{sec:discussion} concludes the paper with a discussion of future directions.

\section{Definitions}\label{sec:definitions}

When analysing categorical data, it is often useful to aggregate,  collapse, or recode groups of data categories into broader categories. After introducing a formalism for this operation of meta-categorisation, we use it to define Markov combinations of discrete statistical models for two separate categorical variables using appropriately-chosen metacategories.

\subsection{Category mappings and aggregates}

A \emph{meta-categorisation} is the grouping of categories from a finite set $I$ into broader categories $M$, called \emph{metacategories}. This can be formalised in two ways: as a partition of $I$ or as a surjective map $p : I \to M$. The second approach, which we call \emph{category mapping}, is preferable because it explicitly names the metacategories through the set $M$. 

\begin{definition}
Let $I$ be a finite set. A \emph{category mapping} of $I$ is a finite set $M$ together with a surjective function $p:I\to M$.
The corresponding \emph{category aggregation} is the partition on $I$ induced by $p$. Elements of $M$ are called \emph{metacategories}. For a metacategory $k \in M$, the
\emph{aggregate category} $I_k$ is the subset $\{ i \in I \mid p(i) = k \}$.
\end{definition}

The categories often index count data, relative frequencies, probability distributions, and statistical models of discrete random variables. In each case, we can aggregate data by summing over the elements $i\in I$ mapped to each particular metacategory. Because of this broad
applicability, we shall give the next definition for $I$-indexed vectors of any given field.

\begin{definition}\label{aggregates}
Let $K$ be a field, $p:I\to M$ a category mapping and $ f = (f_i)_{i\in I} \in K^I$ an $I$-indexed vector with entries in $K$. The \emph{aggregate} of $f$ with respect to $p$ is the vector $f_M\in K^M$ 
whose $k$-th component is given by
\[
f_{M,k} = \sum_{i\in I_k} f_i \quad
\text{for all } k\in M.
\]
\end{definition}

\begin{example} [Parametric probability density functions]
    A discrete \emph{parametric statistical model} over the set of categories $I$ is given by an $I$-indexed sequence of functions $f_i:\Theta\to \mathbb R_{\geq 0}$ such that $\sum_{i\in I}f_i$ is the constant function $1$. Each $f_i$ represents the probability for a random sample to fall into the category $i$, parametrised by $\theta\in\Theta$. To apply Definition~\ref{aggregates} to statistical models we take $K$ to be the field of ratios of functions $\Theta \to \mathbb R$.
    Then, the aggregate $f_M$ of a parametric statistical model $f$ is a parametric statistical model on $M$ whose components are linear combinations of coordinates of $f$. Different families of statistical models can be defined by restricting $K$. For example, in Algebraic Statistics, $K$ is often the field of (polynomial) rational functions in $\Theta$~\cite{pistone-riccomagno,drton-sturmfels}.
\end{example}

\begin{example} [Marginal distributions] \label{marginal}
Marginalisation is a special type of aggregate. Indeed, if $A$ and $B$ are sets of categories for discrete random variables $X$ and $Y$, then $I = A\times B$ is the set of categories for the random vector $(X,Y)$. Let $M=A$ and let $p:I\to M$ be the projection on the first factor $p(a,b) = a$. If $f$ is a probability density on $I$, then its aggregate with respect to $p$ is the usual marginal density function $f_X$ of $f$ with respect to the random variable $X$. Indeed, for $a\in A$ we have $I_a=\{(a,b)\mid b\in B\}$ and
\[
f_{M, a} = \sum_{i\in I_a} f_i = \sum_{b\in B} f(a,b) = f_{X,a}.
\]
\end{example}

\subsection{Mapping products}
When combining two categorical variables $X$ and $Y$, we typically consider the Cartesian product $I \times J$ as the set of
possible joint categories. However, if $I$ and $J$ have a shared Cartesian structure, such as $I = I_0 \times I_1$ and $J = I_1 \times
I_2$, a simpler combined set $I_0 \times I_1 \times I_2$ may be preferred to $I_0 \times I_1 \times I_1 \times I_2$. To enable
such flexibility, we specify a common set of metacategories $M$ for $I$ and $J$, and only include pairs $(i, j) \in I \times J$
where $p(i) = q(j)$, where $p$ and $q$ are category mappings into $M$.

\begin{definition}\label{mapping-product}
For finite sets $I$ and $J$,
let $p:I\to M$ and $q:J\to M$ be category mappings. The \emph{mapping product} (or \emph{pullback} or \emph{fiber product}) 
of $p$ and $q$ is the set $I\times_M J \coloneqq\{(i,j)\in I\times J \mid p(i) = q(j)\}$.
\end{definition}

Here, we have modeled the mapping product as a subset of the Cartesian product of $I$ and $J$. In particular, every distribution on $I\times_M J$ can be extended to $I\times J$ by setting the missing values to zero. 
The mapping product can also be represented as the disjoint union of Cartesian products over aggregate categories:
$
I \times_M J = \coprod_{k\in M} I_k\times J_k$, where $\coprod$ stands for the disjoint union of sets. 
The cardinality of the mapping product is $\sum_{k \in M} |I_k| \cdot |J_k|$. The usual Cartesian product
is a special case of the mapping product where $M$ has only one element and all categories belong to the same
metacategory.

\subsection{Markov combinations}

Suppose we have two sets of categories $I$ and $J$ with the same metacategories, a set of samples $\mathcal A$ labeled by $I$, and a set of samples $\mathcal B$ labeled by $J$, thus we view $\mathcal A$ as a multiset of $I$ and $\mathcal B$ as a multiset of $J$. 
We can construct a combined set of samples by the following procedure: first, sample an element from $\mathcal A$; then, conditional
on the metacategory of this sample, draw a sample from $\mathcal B$, i.e.\ within the same metacategory. Alternatively, we may reverse the order: first sampling from $\mathcal B$, then from $\mathcal A$ within the same metacategory. This motivates the following definition, which describes how such data---represented as frequency distributions over the categories---combine. In both variants, the combined set of samples is labeled by the mapping product $I\times_M J$.

\begin{definition}
Let $K$ be a field, and let $p:I\to M$ and $q:J\to M$ be category mappings. The \emph{left Markov combination} of two vectors $f\in K^{I}$ and $g\in K^{J}$ is the vector $f\ast_M g\in K^{I\times_M J}$ such that
\[
(f\ast_M g)_{i,j}= \frac{f_i \ g_j}{f_{M,p(i)}} \quad
\text{for all } (i,j)\in I\times_M J. 
\]
Analogously, the \emph{right Markov combination} of $f$ and $g$ is defined as the vector $f {\leftindex_M {\ast }\, }g \in K^{I\times_M J}$ given by $(f {\leftindex_M {\ast }\, } g )_{i,j}\coloneqq f_i \ g_j/g_{M,q(j)}$ for all $(i,j)\in I\times_M J$.
\end{definition}

Later we will use the fact that $f {\leftindex_M {\ast }\, } g = g \ast_M f$.
Like aggregates, Markov combinations can be defined over different fields $K$, which enables application at different levels of analysis. For instance, let $f = (\alpha_i)_{i \in I}$ and $g = (\beta_j)_{j \in J}$ be probability density functions, i.e., $\alpha_i, \beta_j \in [0,1]$ with $\sum_i \alpha_i = \sum_j \beta_j = 1$. Then $f \ast_M g$ defines a density function on $I \times_M J$.
One can also combine two parametric statistical models $(f_i)_{i \in I}$ and $(g_j)_{j \in J}$, where $f_i, g_j : \Theta \to \mathbb{R}_{\geq 0}$ for all $i$ and $j$, by operating within the field of fractions of the ring of real-valued functions on $\Theta$. The result is again a statistical model.

By using the basic operations of left and right Markov combinations of statistical models, we can define several variants, each characterised by a different interpretation in terms of sampling. To this end, we adapt the core model combinations listed in~\cite{dawid-lauritzen} and~\cite{massa-lauritzen} to our setup for discrete models.
To formalise this idea, we consider Markov combinations within the function field $K(\Theta)$, i.e.\ the field of ratios of functions $\Theta \to \mathbb{R}$. A statistical model is then represented as a vector in $K(\Theta)^I$, i.e., as a vector of rational functions indexed by $I$.

\begin{definition}\label{markov-variants}
Let $I$ and $J$ be finite sets and $p:I\to M$ and $q:J\to M$  category mappings.
For a field $K$ and a parameter space $\Theta$, let $K(\Theta)$ be the field of ratios of functions $\Theta\to \mathbb R$. 
\begin{enumerate}

\item  Two vectors $f\in K^I$ and $g\in K^J$ are said to be \emph{consistent} if their aggregates w.r.t.\ $p$ and $q$ coincide, i.e., if $f_M = g_M$. In this case, the left and right Markov combinations coincide and the \emph{Markov combination} of $f$ and $g$ is defined as $f\star g =f\ast_M g=f {\leftindex_M {\ast }\, }g$.

\item Two parametric statistical models $f\in K(\Theta)^I$ and $g\in K(\Theta)^J$ are said to be \emph{meta-consistent} if for every $\theta \in \Theta$, the evaluations $f(\theta)$ and $g(\theta)$ are consistent. In this case, we define their \emph{meta-Markov combination} as the 
vector $f \star g\in K(\Theta)^{I\times_M J}$  obtained by taking the Markov combination pointwise in $\theta$.

\item Let $f\in K(\Theta)^I$ and $g\in K(\Theta)^J$ be parametric models and define the parameter space $\Theta'\coloneqq\{(\theta_1,\theta_2)\in\Theta\times \Theta\mid f(\theta_1), g(\theta_2) \text{ consistent}\}$. 
    \begin{enumerate} 

    \item The \emph{lower Markov combination} of $f$ and $g$ is the vector $f \lowerstar g$ in $K(\Theta')^{I\times_M J}$ defined for all $(\theta_1,\theta_2)\in\Theta'$ by
    \begin{equation*}
    (f\lowerstar g)(\theta_1,\theta_2) = f(\theta_1)\star g(\theta_2).
    \end{equation*}

    \item The \emph{upper Markov combination} of $f$ and $g$ is the vector $f\upperstar g$ in $K(\Theta\times \Theta\times \{0,1\})^{I\times_M J}$ defined for all $\theta_1,\theta_2\in\Theta$ by 
\begin{align*}
(f\upperstar g)(\theta_1, \theta_2, 0) &= f(\theta_1)\ast_M g(\theta_2),\\
(f\upperstar g)(\theta_1, \theta_2, 1) &= f(\theta_1) {\leftindex_M {\ast }\, } g(\theta_2).
\end{align*}

    \item The \emph{super Markov combination} of $f$ and $g$ is the vector $f\otimes g$ in $K(\Theta^3\times \{0,1\})^{I\times_M J}$ defined for all $\theta_1,\theta_2,\theta_3\in \Theta$, $(i,j)\in I\times_M J$ by 
\begin{align*} 
(f\otimes g)_{i,j}(\theta_1,\theta_2,\theta_3,0) 
&=\frac{f_i(\theta_1)}{f_{M,p(i)}(\theta_1)}\cdot f_{M,p(i)}(\theta_2) \cdot\frac{g_j(\theta_3)}{g_{M,q(j)}(\theta_3)},\\
(f\otimes g)_{i,j}(\theta_1,\theta_2,\theta_3,1) 
&=\frac{f_i(\theta_1)}{f_{M,p(i)}(\theta_1)}\cdot g_{M,q(j)}(\theta_2) \cdot\frac{g_j(\theta_3)}{g_{M,q(j)}(\theta_3)}.
\end{align*}  
    \end{enumerate}

\end{enumerate}
\end{definition}

\begin{example}
    Let $I=\{ a,b,c\}$,  $J=\{ \clubsuit, \diamondsuit,
\heartsuit, \spadesuit \}$, $M=\{1,2\}$, and define the two category mappings $p$ and $q$ by
\[
\begin{array}{lll}
p(a) =1 &&  p(b)=p(c)=2 
\\
q(\clubsuit)=q( \diamondsuit) =q(\heartsuit)=1 && q(\spadesuit)=2. 
\end{array}
\]
The two density vectors $f =(\frac34, \frac18, \frac18)$ defined on $I$ and $g=(\frac14,\frac14,\frac14,\frac14)$ defined on $J$ are consistent w.r.t. the given category mappings, while neither is consistent with the vector $(\frac18,\frac18,\frac34)$ defined on $I$.
Similarly, for $\theta\in[0,\frac14]$ the two statistical models 
$f =(3\theta, \theta, 1-4\theta)$ on $I$ and $g=(\theta,\theta,\theta,1-3\theta)$ on $J$ are consistent, while $f=(1-4\theta, 2\theta,2\theta)$ on $I$ and $(\theta,\theta,\theta,1-3\theta)$ on $J$ are consistent only for $\theta=1/7$.
\end{example}

Note that a) all notions of Markov combination depend on the choice of category mappings to $M$ which we leave 
implicit in the terminology,
b) the operator $\star$ is commutative, and
c) one could write upper Markov combinations using only left Markov combinations since
$(f\upperstar g)(\theta_1, \theta_2, 1) = f(\theta_1) {\leftindex_M {\ast }\, } g(\theta_2) =
g(\theta_2) {\ast_M } f(\theta_1)$. 
 
The distinction between left and right Markov combinations reflects the lack of commutativity of these operations for non-consistent densities. This motivates the separation into cases indexed by $0$ and $1$ in Definition~\ref{markov-variants}(3b-3c). 
From the meta-Markov combination of two meta-consistent models, the original models can be recovered by taking aggregates, see Example~\ref{aggregate-meta-markov} for more details. 

The combinations defined in Definition~\ref{markov-variants} can be interpreted in terms of sampling schemes. Let $\mathcal{A}$ and $\mathcal{B}$ be two multisets of $I$ and $J$, respectively.

- For the basic Markov combination in Definition~\ref{markov-variants}(1), we can sample from $\mathcal{A}$ and then from $\mathcal{B}$ within the same metacategory, or vice versa. 
This corresponds to taking the left or the right Markov combination, which coincide if the factor distributions are consistent.

- For the meta-Markov combination in Definition~\ref{markov-variants}(2), we assume that $\mathcal{A}$ and $\mathcal{B}$ are generated by parametric models. Fixing a parameter $\theta \in \Theta$, we perform sampling from $\mathcal{A}$ and then from $\mathcal{B}$ within the same metacategory, or vice versa. If, for all $\theta$, the two distributions obtained by sampling first from $\mathcal A$ or first from $\mathcal B$  are equal, then the models are meta-consistent. 

- Definition~\ref{markov-variants}(3) addresses the case of not meta-consistent models. For the lower Markov combination in Definition~\ref{markov-variants}(3a) the model parameters $\theta_1$ and $\theta_2$ may differ between sampling from $\mathcal A$ or from $\mathcal B$, as long as they yield consistent distributions.
For the upper Markov combination in Definition~\ref{markov-variants}(3b), the binary parameter $0$ or $1$ determines whether a left or right combination is taken. Here, a change of parameter $\theta$ between sampling from $\mathcal A$ and from $\mathcal B$ is allowed.

- For the super Markov combination in Definition~\ref{markov-variants}(3c), we allow full flexibility: parameters may vary between all stages, and the choice of left or right combination is encoded explicitly. We first draw a sample to fix a metacategory, then we sample from both $\mathcal A$ and $\mathcal B$ whithin that metacategory. The fourth parameter decides whether the metacategory is decided by sampling from $\mathcal A$ or from $\mathcal B$.

In practical applications, one might wish to restrict the flexibility of parameter variation across sampling stages. The following definition introduces the notion of \emph{restricted Markov combinations} to capture such scenarios.

\begin{definition}\label{restricted-markov-variants}
Let $f\in K(\Theta)^I$ and $g\in K(\Theta)^J$ be parametric models.
\begin{enumerate}
\item[($3'$a)] The \emph{restricted lower Markov combination} of $f$ and $g$ is the vector $f\lowerstar g$ in $K(\Theta')^{I\times_M J}$ where $\Theta'\coloneqq\{\theta\in \Theta\mid f(\theta), g(\theta) \text{ consistent}\}$ and
\[
(f\lowerstar g)(\theta) = f(\theta)\star g(\theta)
\quad \text{for all } \theta\in \Theta'.
\]
\item[($3'$b)] The \emph{restricted upper Markov combination} of $f$ and $g$ is the vector $f\upperstar g$ in $K(\Theta\times\{0,1\})^{I\times_M J}$ where
\begin{align*}
(f\upperstar g)(\theta, 0) &= f(\theta)\ast_M g(\theta),\\
(f\upperstar g)(\theta, 1) &= f(\theta) {\leftindex_M {\ast }\, } g(\theta)
\end{align*}
for all $\theta\in \Theta$.
\item[($3'$c)] The \emph{restricted super Markov combination} of $f$ and $g$ is the vector $f\otimes g$ in $K(\Theta^2 \times\{0,1\})^{I\times_M J}$ where
\begin{align*} 
(f\otimes g)_{i,j}(\theta_1,\theta_2,0) 
&=\frac{f_i(\theta_1)}{f_{M,p(i)}(\theta_1)}\cdot f_{M,p(i)}(\theta_2) \cdot\frac{g_j(\theta_1)}{g_{M,q(j)}(\theta_1)},\\
(f\otimes g)_{i,j}(\theta_1,\theta_2,1) 
&=\frac{f_i(\theta_1)}{f_{M,p(i)}(\theta_1)}\cdot g_{M,q(j)}(\theta_2) \cdot\frac{g_j(\theta_1)}{g_{M,q(j)}(\theta_1)}
\end{align*}
for all $\theta_1,\theta_2\in \Theta$ and $(i,j)\in I\times_M J$.
\end{enumerate}
\end{definition}

\subsection{Structured Super-Markov}

We present a slight generalisation of the Super-Markov combinations of Definition~\ref{markov-variants}(3c). Two 
category mappings $I\to M$ and $J\to M$ and three statistical models are assumed: $f(\theta_1)$ over $I$, $h(\theta_2)$ over $M$ and $g(\theta_3)$ over $J$. The \emph{structured Super-Markov combination} of $f$ and $g$ with respect to $h$ is the model over $I\times_M J$ which is parametrised by $(\theta_1,\theta_2,\theta_3)$ and for $k\in M$, $i\in I_k$ and $j\in J_k$ takes the value
\[
(f\otimes_h g)_{i,j}(\theta_1,\theta_2,\theta_3) \coloneqq 
\frac{f_i(\theta_1)}{f_{M,k}(\theta_1)}\cdot h_k(\theta_2)\cdot \frac{g_j(\theta_3)}{g_{M,k}(\theta_3)}.
\]

We can recover the 
Super-Markov combination by choosing $h(\theta_2)$ in such a way that all aggregates of $f$ and $g$ can occur.
An example of this can be found
in Definition~\ref{markov-variants} where we 
set
$\theta_2 = (\theta_2',\alpha)$, 
with $\alpha\in\{0,1\}$ 
indicating
whether to take $h=f_M(\theta_2')$ or $h=g_M(\theta_2')$.

We calculate the aggregates of the Super-Markov as follows:
\[
(f\otimes_h g)_{I,i}(\theta_1,\theta_2,\theta_3)
=
\frac{f_i(\theta_1)}{f_{M,k}(\theta_1)}h_k(\theta_2)
\sum_{j\in J_k}
\frac{g_j(\theta_3)}{g_{M,k}(\theta_3)}
=
\frac{f_i(\theta_1)}{f_{M,k}(\theta_1)}\cdot h_k(\theta_2)
\]
since the summation equals one.
This expression does not depend on $\theta_3$ and we may write it as $(f\otimes_h g)(\theta_1, \theta_2)$. Analogously, the other aggregates over $J$ and $M$ are
\[
(f\otimes_h g)_{J,j}(\theta_2,\theta_3) = h_k(\theta_2)\cdot \frac{g_j(\theta_3)}{g_{M,k}(\theta_3)}\quad\text{and} \quad
(f\otimes_h g)_{M,k}(\theta_2)=h_k(\theta_2)
\]
Note that the two aggregates $(f\otimes_h g)_I$ and $(f\otimes_h g)_J$ are meta-consistent.

A notable property of the structured Super-Markov, and therefore of the ordinary Super-Markov, is that it is closed with respect to repeated application and marginalisation, as follows.
\begin{description}
\item[In the meta-consistent case we have] 
$
(f\star g)_I \star (f\star g)_J = f\star g
$
(see Remark~\ref{aggregate-meta-markov}.)
\item[In the structured Super-Markov case we have]
\begin{align*}
&
\bigl((f\otimes_h g)_I\otimes_h (f\otimes_h g)_J\bigr)_{i,j}\bigl(
(\theta_1,\theta_2), \theta'_2, (\theta_2'',\theta_3)
\bigr)
\\ &= \frac{(f\otimes_h g)_{I,i}(\theta_1,\theta_2)}{(f\otimes_h g)_{M,k}(\theta_2)}\cdot h_k(\theta_2')\cdot \frac{(f\otimes_h g)_{J,j}(\theta_2'',\theta_3)}{(f\otimes_h g)_{M,k}(\theta''_2)}\\
&=
\frac{f_i(\theta_1)}{f_{M,k}(\theta_1)}\cdot h_k(\theta_2')\cdot \frac{g_j(\theta_3)}{g_{M,k}(\theta_3)}\\
&= (f\otimes_h g)_{i,j}(\theta_1,\theta_2',\theta_3).
\end{align*}

\end{description}
This confirms and generalises the result in~\cite{massa-lauritzen} by which the Super-Markov combination is stable under structured recombination of its marginal components.

\section{Examples}\label{sec:examples}

\begin{example}[Structural zeros and independence]
Suppose we want to model the academic career of a PhD student. We record two variables which we assume to be independent: the number of years $X$ she spends in her doctoral program and the number of publications $Y$ she 
produces during that time.
To keep the state space finite, we set upper limits: $x_0 = 5$ for the number of years and $y_0 = 10$ for the number of publications. Values beyond these thresholds are treated as censored and we only record that $X \geq x_0$ or $Y \geq y_0$. 
 
To model this in our setup, we take $I=\{0,\dotsc,x_0\}$, $J=\{0,\dotsc,y_0\}$, $M = \{0,1\}$ and category mappings $p$ on $I$ and $q$ on $J$ 
such that $p(0)=q(0)=0$ and all other values of $I$ and $J$ are mapped into $1\in M$. Finally we take the mapping product 
\[I\times_M J = \{(0,0), (x,y)\mid x\in I, y\in J, x,y>0\}\]
as the state space for the joint random vector $Z = (X, Y)$. 
This reflects two modelling assumptions:
\begin{enumerate}
\item if the doctoral programme was completed in any number of years ($x > 0$), then there is at least one publication (e.g. the dissertation itself), i.e. $y > 0$.
\item If there is at least one publication ($y > 0$), then the doctoral programme must have been running for some time, i.e. $x > 0$.
\end{enumerate}
Therefore, $(0,0)$ is the only valid case in which one of the two variables is zero.

Let $f_X$ be a probability law for $X$ given as a density function on $I$, and analogously define $g_Y$.
The independence assumptions would give the joint distribution $f(x,y) = f_X(x) \ g_Y(y)$ for $(x,y) \in I\times J$. But the joint model is defined only on 
$ I\times_M J$ and the  modelling assumptions imply $f_X(0) = g_Y(0)$, that is $f_X$ and $g_Y$ are consistent w.r.t.~$M$.
If $f_X$ and $g_Y$ are from parametric families, e.g.\ both are Poisson distributed with unknown parameters, then consistency imposes equality of parameters and $f_X$ and $g_Y$ are meta-consistent.
Finally, the hypothesis of independence between $f_X$ and $g_Y$ is expressed by the following probability density function over $I\times_M J$: 
\[
( f\star g) (x,y)=
\begin{cases}
\displaystyle \frac{f(0) \ g(0)}{f(0)} & \text{ if } x=y = 0 \\[15pt]
\displaystyle \frac{f(x)\ g(y)}{1-f(0)} & \text{ otherwise.} 
\end{cases}
\]

\end{example}

\begin{example}[Independence of discrete random variables]

If in the previous example we do not exclude the elements $(x, 0)$ and $(0, y)$, then the standard Cartesian product $I \times J$ is the state space and the independence assumption corresponds to the usual factorization $f_X \star g_X = f_Xg_X$, which is a special case of a Markov combination where the set of meta-categories consists of a single element.

\end{example}

\begin{example}
For a more more abstract example let $J = \{-3,-2,-1,0,1\}$
and $I=\{0,1,2,3\}$. Consider 
a two-element set of metacategories $M = \{a,b\}$ and two category mappings $I\to M$, $J\to M$ where $0,1\mapsto a$ and the other elements of $I$ and $J$ map to $b$.
The mapping product is
\[
I \times_M J = (I_a\times J_a) \cup (I_b\times J_b)
\]
where $I_a=J_a=\{0,1\}$, $I_b = \{2,3\}$, and $J_b=\{-3,-2,-1\}$. Two distributions $f$ and $g$ over $I$ and $J$, respectively, are consistent if $f(0)+f(1) = g(0) + g(1)$. Their Markov combination is
\[
(f\star g)(i,j) = \frac{f(i) \ g(j)}{\sum_{i'\in I_k}f(i')}
\quad \text{for all } k\in M, i\in I_k, j\in J_k.
\]
\end{example}

\begin{example}[Distributions with given marginals]\label{generalisation} 
Consider three finite sets $A,B,C$, 
and define $I=A\times C$ and $J=C\times B$. 
Assume two distributions $f:I\to \mathbb R$ and $g:J\to \mathbb R$ such that the marginals over $C$ coincide: $f_C = g_C$. (Recall from Example~\ref{marginal} that marginals are aggregates). We would like to define a distribution $h$ on $A\times C\times B$ such that $h_I = f$ and $h_J=g$. It turns out that $h$ can be realised as a Markov combination of $f$ and $g$, once appropriate meta-categories are defined.

Let $M=C$ and define
the category mappings $p:A\times C\to C$ and $q:C\times B\to C$
 as the projections to $C$. 
Then $I\times_M J$ consists of all pairs $((a,c),(c',b))$ where $c=c'$. Therefore, $I\times_M J\simeq A\times C\times B$. Under this identification, the Markov combination of $f$ and $g$ is
\[
(f\star g)(a,c,b) = \frac{f(a,c)g(c,b)}{f_C(c)} = \frac{f(a,c)g(c,b)}{\sum_{a'\in A}f(a',c)}.
\]
The marginal distributions of $f\star g$ satisfy the required conditions 
\begin{align*}
\sum_{b'\in B}(f\star g)(a,c,b')
&= \frac{f(a,c)g_C(c)}{f_C(c)}
= f(a,c)
\quad \text{and}\\
\sum_{a'\in A}(f\star g)(a',c,b)
&= \frac{f_C(c)g(c,b)}{f_C(c)}
= g(c,b)
\end{align*}
for all $a\in A$, $b\in B$, $c\in C$.
\end{example}

\begin{remark}
Example~\ref{generalisation} shows that our discrete Markov combinations are generalisations of the Markov combinations of~\cite{massa-riccomagno}, applied to the discrete case.
\end{remark}

\begin{remark}\label{aggregate-meta-markov}
If $p:I\to M$ and $q:J\to M$ are category mappings and $f\in K(\Theta)^I$, $g\in K(\Theta)^J$ are meta-consistent statistical models, then
\[
(f\star g)_I = f \quad \text{and} \quad (f\star g)_J = g,
\]
where the aggregates are taken with respect to the natural category mappings $\pi_I:I\times_M J\to I$ and $\pi_J:I\times_M J\to J$. For instance, for $i\in I$ we have
\[
(f\star g)_{I,i} = 
\sum_{\pi_I(i',j') = i} \frac{f_{i'}\,g_{j'}}{f_{M,p(i')}}
=
\sum_{q(j) = p(i)} \frac{f_i g_j}{f_{M, p(i)}}
=
\frac{f_i}{f_{M,p(i)}}\, g_{M,p(i)} = f_i.
\]
This generalises the fact that for meta-consistent statistical models defined on overlapping Cartesian spaces as in Example~\ref{generalisation}, the operation of Markov combination preserves the margins. 
\end{remark}

\section{Properties}\label{sec:properties}

\subsection{Associativity}

Let $f$, $g$, and $h$ be distributions on the finite sets $I$, $J$, and $K$, respectively.
The possibilities for combining $f,g,h$ via Markov combinations depend on the meta-category structure on $I$, $J$ and $K$.
Here we shall study these possibilities in the case of meta-Markov combinations.

Assume that $f$, $g$ and $h$ are pairwise meta-consistent~\cite{massa-lauritzen}. This means that there are meta-categories $M_1$, $M_2$, and $M_3$ and three pairs of category mappings relating each pair of category sets such that the corresponding meta-consistence relations hold, as shown in the following diagram:

\[\begin{tikzcd}
	& I \\
	{M_1} && {M_3} \\
	J && K \\
	& {M_2}
	\arrow[from=1-2, to=2-1]
	\arrow[from=1-2, to=2-3]
	\arrow[from=3-1, to=2-1]
	\arrow[from=3-1, to=4-2]
	\arrow[from=3-3, to=2-3]
	\arrow[from=3-3, to=4-2]
\end{tikzcd}
\quad\quad
\begin{tabular}{ScScSc}
$f_{M_1} = g_{M_1}$\\
$g_{M_2} = h_{M_2}$\\
$h_{M_3} = f_{M_3}$
\end{tabular}
\]

Taking into account that the operator $\star$ is commutative, there are six possible ways of obtaining a distribution from $f$, $g$, and $h$ by applying a binary meta-Markov combination twice.
\begin{alignat*}{2}
(f\star_{M_1} g) \star_{M_3} h
&\qquad (f \star_{M_1} g) \star_{M_2} h
&\qquad f \star_{M_3} (g \star_{M_2} h)
\\
(f\star_{M_3} h) \star_{M_1} g
&\qquad f\star_{M_1} (g \star_{M_2} h)
&\qquad f\star_{M_3} (h \star_{M_2} g)
\end{alignat*}
These are all well defined. Indeed,
we can write the first combination due to the category mapping $I\times_{M_1} J \to M_3$ given by passing through $I$, and the second combination can be taken via the category mapping $I\times_{M_1} J\to M_2$ passing through $J$. Similar arguments apply to the others.

By expanding these  expressions and using the fact that for a chain of category mappings $I\to M\to M'$ and a distribution $f$ on $I$ we have $(f_M)_{M'}= f_{M'}$,
we obtain the following equalities:
\begin{align*}
(f\star_{M_1} g) \star_{M_3} h
&= (f\star_{M_3} h) \star_{M_1} g
\\
(f \star_{M_1} g) \star_{M_2} h
&= f\star_{M_1} (g \star_{M_2} h)
\\
f \star_{M_3} (g \star_{M_2} h)
&= f \star_{M_3} (h \star_{M_2} g).
\end{align*}
In general, there are no more equalities among these six expressions. Indeed, the equality $(f\star_{M_1} g) \star_{M_3} h = (f \star_{M_1} g) \star_{M_2} h$
is equivalent to the existence of a bijection $M_3\simeq M_2$ that makes the above diagram commute, and such that $h_{M_3} = h_{M_2}$ under this bijection. Similar statements hold for the other two missing equalities.

If $M\coloneqq M_1=M_2=M_3$, then the meta-consistency hypothesis is $f_M=g_M=h_M$. In this case, all six combinations are equal and full associativity holds:
\[
(f \star_M g) \star_M h = f \star_M (g \star_M h) = (f \star_M h) \star_M g.
\]
More generally, if we are given $m$ category mappings $I_\ell \to M$ and pairwise meta-consistent distributions $f_\ell$ on $I_\ell$ (for $\ell\in\{1,\ldots,m\}$), the $m$-ary meta-Markov combination $\star_{\ell=1}^m f_\ell$ is defined by taking successive binary meta-Markov combinations of the $f_\ell$. By associativity, every possible way to do this gives the same distribution.

\subsection{Unit Element, Inverses} The unit element with respect to $\star$ is the degenerate model $\mathbf{1}$ corresponding to the unique probability density function over the set with one element.
In general, there are no inverses with respect to $\star$ since the set of categories of a meta-Markov combination always has at least as many elements as the sets of categories of its components.

\subsection{Mixtures} To demonstrate the flexibility of general Markov combinations combined with aggregates, we show how to construct arbitrary mixture models using only these two operations. Recall that for two parametric models $(f_i)$, $(g_i)$ $(i\in I)$, the \emph{mixture} $\Mixt(f,g)$ is defined as the model $(\lambda f_i + (1-\lambda)g_i)$ $(i\in I)$, where $\lambda\in[0,1]$ is an additional parameter. 

\begin{proposition}
Let $f$ and $g$ be two parametric models over the same finite set $I$.
Let $\mathbf 2$ denote the one-parameter model defined by sending $\lambda\in [0,1]$ to $(\lambda, 1-\lambda)$. 
Then $\Mixt(f,g)$ can be constructed from $f$, $g$, and
$\mathbf 2$
by a chain of Markov combinations and aggregates as follows: 
\[
((f\times \mathbf 2)_{M_1} \star (g\times \mathbf 2)_{M_2})_{M_4} = (\lambda f_i + (1-\lambda)g_i\mid i\in I) = \Mixt(f,g).
\]
\begin{proof}
Construct
the models \[f\times \mathbf 2 = (\lambda f_i, (1-\lambda) f_i \mid i\in I) \quad\text{and}\quad g\times \mathbf{2} = (\lambda g_i, (1-\lambda) g_i \mid i\in I)\] as Markov combinations with the trivial category mapping $I\to \{\bullet\}$. Aggregate the components of $f\times \mathbf{2}$ involving $1-\lambda$ and those of $g\times \mathbf{2}$ involving $\lambda$ to obtain
\[
(f\times \mathbf 2)_{M_1} = ((\lambda f_i\mid i\in I), 1-\lambda) \quad\text{and}\quad (g\times \mathbf 2)_{M_2} = \bigl(\lambda, ((1-\lambda)g_i \mid i\in I)\bigr).
\]
Next, define category mappings to $M_3=\{0,1\}$ that put all $\lambda f_i$ in the same category as $\lambda$ and all $(1-\lambda)g_i$ in the same category as $(1-\lambda)$. With these mappings, the distributions $(f\times \mathbf 2)_{M_1}$ and $(g\times \mathbf 2)_{M_2}$ are meta-consistent since $\sum_i \lambda f_i = \lambda$ and $\sum_i (1-\lambda)g_i = 1-\lambda$. Their Markov combination is
\[
(f\times \mathbf 2)_{M_1} \star (g\times \mathbf 2)_{M_2}
=
(\lambda f_i, (1-\lambda) g_i \mid i\in I).
\]
Aggregating the entries $\lambda f_i$ and $(1-\lambda)g_i$ pairwise yields
\[
((f\times \mathbf 2)_{M_1} \star (g\times \mathbf 2)_{M_2})_{M_4} = (\lambda f_i + (1-\lambda)g_i\mid i\in I) = \Mixt(f,g). \qedhere
\]
\end{proof}

\begin{remark}
Mixture models have an $m$-ary variant defined as
\[
\Mixt(f^{(j)} \mid j\in [m])_i = 
\sum_j \lambda_j f^{(j)}_i,
\]
where $f^{(1)},\dotsc,f^{(m)}$ are models over $I$ and the $\lambda_j$ are non-negative parameters such that $\sum_j \lambda_j = 1$. Up to a reparametrisation of the mixture parameters, these models are equivalent to successive binary mixtures, for instance $\Mixt(\Mixt(f,g),h)\simeq \Mixt(f,g,h)$. Therefore, $m$-ary mixtures can be obtained by concatenating binary Markov combinations and aggregates.
\end{remark}

\end{proposition}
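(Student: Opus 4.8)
The plan is to carry out the prescribed chain of operations and track the intermediate vector at each stage, so that the final aggregate is manifestly the mixture. First I would realise the two products as Markov combinations over the trivial one-element metacategory: writing $\mathbf 2$ as a density on $\{0,1\}$, the product $f\times\mathbf 2$ has components $\lambda f_i$ and $(1-\lambda)f_i$, while $g\times\mathbf 2$ has components $\lambda g_i$ and $(1-\lambda)g_i$. I would then apply the two aggregates named in the statement: collapsing the entries $(1-\lambda)f_i$ of $f\times\mathbf 2$ into a single one of value $\sum_i(1-\lambda)f_i=1-\lambda$ yields $(f\times\mathbf 2)_{M_1}=((\lambda f_i\mid i\in I),\,1-\lambda)$, and symmetrically collapsing the entries $\lambda g_i$ of $g\times\mathbf 2$ yields $(g\times\mathbf 2)_{M_2}=(\lambda,\,((1-\lambda)g_i\mid i\in I))$.

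The central step is the middle Markov combination. I would equip the two models with category mappings to $M_3=\{0,1\}$ as follows: in $(f\times\mathbf 2)_{M_1}$ send the block $\lambda f_i$ to $0$ and the singleton $1-\lambda$ to $1$; in $(g\times\mathbf 2)_{M_2}$ send the singleton $\lambda$ to $0$ and the block $(1-\lambda)g_i$ to $1$. The key verification is meta-consistency: both aggregates evaluate to $(\lambda,1-\lambda)$, since $\sum_i\lambda f_i=\lambda$ and $\sum_i(1-\lambda)g_i=1-\lambda$. With consistency established I evaluate the combination metacategory by metacategory. In metacategory $0$ the second factor contributes the singleton aggregate class $\{\lambda\}$, so dividing by the common aggregate $\lambda$ cancels and leaves $\lambda f_i$; symmetrically metacategory $1$ leaves $(1-\lambda)g_i$. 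Hence the combined vector is $(\lambda f_i,(1-\lambda)g_i\mid i\in I)$.

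It remains to apply the aggregate $M_4$ that pairs, for each fixed $i$, the entry $\lambda f_i$ with the entry $(1-\lambda)g_i$; summing gives $\lambda f_i+(1-\lambda)g_i=\Mixt(f,g)_i$, as required. The main obstacle is the bookkeeping in the central step: one must arrange the mapping product so that in each metacategory exactly one of the two factors has a singleton aggregate class, which is precisely what makes the Markov-combination denominator cancel cleanly. Verifying that the chosen category mappings are genuinely meta-consistent---equality of both aggregates as functions of $\lambda$---is the only real content; everything else is direct substitution into the definitions of aggregate and Markov combination.
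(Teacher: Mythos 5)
Your proposal is correct and follows exactly the same route as the paper's proof: the same products $f\times\mathbf 2$, $g\times\mathbf 2$ over the trivial metacategory, the same aggregates $M_1$, $M_2$, the same category mappings to $M_3=\{0,1\}$ with the meta-consistency check $\sum_i\lambda f_i=\lambda$ and $\sum_i(1-\lambda)g_i=1-\lambda$, and the same final pairwise aggregate $M_4$. Your explicit metacategory-by-metacategory verification that the denominators cancel (because one factor in each metacategory is a singleton) is a slightly more detailed rendering of the step the paper states directly, but it is the same argument.
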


\section{Special Models}\label{sec:models}

\subsection{Discrete Copulas}
In this subsection, we establish a relationship between discrete copulas in the sense of~\cite{discrete-copulas} and Markov combinations. We will show that \emph{the product of two discrete copulas corresponds to a marginal of the Markov combination of their distributions}.

Let $n,m\in\mathbb N$ and define $\langle n\rangle \coloneqq\{0,,\dotsc,n\}$, $[n]\coloneqq\{1,\dotsc,n\}.$ Following the presentation of Kolesárová et al. with $\langle n\rangle$ in place of $I_n$ therein, a \emph{discrete copula} is a function
\[
C: \langle n\rangle \times \langle m\rangle\to [0,1]
\]
such that for all $i\in\langle n\rangle$ and $j\in\langle m\rangle$,
\begin{enumerate}
\item[(C1)] $C(i,0) = C(0,j) = 0$,
\item[(C2)] $C(i,m) = i/n$, $C(n,j) = j/m$,
\item[(C3)] $C(i-1,j-1)-C(i,j-1)-C(i-1,j)+C(i,j)\geq 0$ whenever $i,j\geq 1.$
\end{enumerate}

Recall that a matrix $(a_{ij})\in\mathbb R^{n\times n}_{\geq 0}$ is \emph{bistochastic} if $\sum_{i=1}^n a_{ik} = \sum_{j=1}^n a_{kj} = 1$ for all $k\in[n]$.
For a function $C:\langle n\rangle\times\langle n\rangle\to[0,1]$, the following are equivalent:
\begin{enumerate}
\item[(a)] $C$ is a copula;
\item[(b)] there is a bistochastic matrix $A = (a_{ij})\in \mathbb N_{\geq 0}^{n\times n}$ such that
\[
C(i,j) = \frac{1}{n}\sum_{k=1}^{i}\sum_{\ell=1}^j a_{k\ell};
\]
\item[(c)] there is a probability density $h:[n]\times [n]\to[0,1]$ such that for all $i\in[n]$ and $j\in[n]$ 
\[
\sum_{k=1}^n h(k,j) = \sum_{\ell=1}^n h(i,\ell) = 1/n \quad \text{and} \quad
C(i,j) = \sum_{k=1}^i\sum_{\ell=1}^j h(k,\ell).
\]
\end{enumerate}
The equivalence between (a) and (b) is proven in~\cite{discrete-copulas}. The equivalence with (c) follows by recalling that copulas are cumulative density functions of multivariate random variables with uniformly distributed marginals and by setting 
$h(k,\ell) = a_{k,\ell}/n = $ the left-hand term of the inequality in (C3).

To establish a relationship between products of discrete copulas and Markov combinations,  let $C_1,C_2:\langle n\rangle^2\to [0,1]$ be copulas, $A=(a_{ij}), B=(b_{ij})$ their associated bistochastic matrices and $\alpha, \beta$ their associated distributions on $[n]^2$ given by $a_{ij} = n\alpha(i,j)$ and $b_{ij}=n\beta(i,j)$. The authors of~\cite{discrete-copulas} define the \emph{product copula} 
as the copula corresponding to the bistochastic matrix $A\cdot B$. A computation shows that the distribution $\gamma$ associated to the product copula is given by
\[
\gamma(i,j) = \sum_{k=1}^n n \alpha(i,k)\beta(k,j) \quad\text{for all } i,j\in [n].
\]
Now let $I=J=[n]\times [n]$, $M=[n]$, and define the category mappings $p:I\to M$ and $q:J\to M$ as the second and first projection to $[n]$, respectively. Consider $\alpha$ and $\beta$ as distributions on $I$ and $J$, respectively. For $k\in M$ we have $\alpha_M(k) = \beta_M(k) = 1/n$ and therefore
\[
\gamma(i,j) = \sum_{k=1}^n\frac{\alpha(i,k)\beta(k,j)}{\alpha_M(k)} = \sum_{k=1}^n (\alpha\star\beta)(i,k,j).
\]
Thus, $\gamma$ is a marginal distribution, i.e.\ an aggregate, of the Markov combination of $\alpha$ and $\beta$. If we define $M'=[n]\times [n]$
and the
category mapping  $I\times_M J = [n]\times [n]\times [n] \to M'$ by sending $(i,k,j)$ to $(i,j)$, we can write this relationship more succintly as
\[
\gamma = (\alpha\star \beta)_{M'}.
\]

\begin{remark}

The above discussion suggests a slight generalisation of copulas, where in (c), the marginals of the density $h$ are not $1/n$ but any two fixed distributions $\nu,\mu$ on $[n]$ depending on the direction of the marginal, i.e.
\[
\sum_{k=1}^n h(k,j) = \mu(j) \quad\text{and}\quad
\sum_{\ell=1}^n h(i,\ell) = \nu(i).
\]
Then, condition (C2) would be replaced by
\[
C(i,m) = \sum_{k=1}^i\nu(k) \quad\text{and}\quad
C(n,j) = \sum_{\ell=1}^j\mu(\ell).
\]
Indeed, given (C1--C3) with the modified (C2), one can verify by telescope sums that the left-hand side $h(i,j)$ of (C3) defines a probability distribution on $[n]\times [n]$ with marginals $\mu$ and $\nu$ such that $C$ is the cumulative distribution of $h$.
\end{remark}

\subsection{Saturated models}\label{sec:saturated-models}
In the discrete setting,
a \emph{saturated model is}
a parametric model $f=(f_i)_{i\in I}$ where the image of $f\colon \Theta\to \mathbb R^I$ equals the probability simplex $\Delta^{|I|-1}$.
This terminology is used for instance in~\cite{sullivant}.
A standard parametrisation of any saturated model on $I=\{0,1,\ldots,n\}$ is given as follows: $f = (\theta_0,\theta_1,\dotsc,\theta_n)$, $\theta_1,\dotsc,\theta_n \geq 0$, $\theta_0 \coloneqq 1 - \sum_{i>0}\theta_i$. However the following example shows that other parametrisations might be better for Markov combination.

\begin{example}
Let $f = (\theta_0,\theta_1,\theta_2,\theta_3)$ and $g=(\eta_0,\eta_1,\eta_2,\eta_3)$ be two saturated models. Let $I=J=\{0,1,2,3\}$ and define category mappings $I\to \{0,1\}$ and $J\to \{0,1\}$ by setting $I_0=\{0,1\}$, $I_1=\{2,3\}$, $J_0=\{0\}$, $J_1=\{1,2,3\}$. Then $f$ and $g$ are not meta-consistent, as this would require
\begin{align*}
\theta_0+\theta_1 &= \eta_0\\
\theta_2+\theta_3 &= \eta_1+\eta_2+\eta_3.
\end{align*}
However, the above equations are satisfied by reparametrising $g$: set \[g' = (\theta_0+\theta_1, \eta_1, \eta_2, \theta_2+\theta_3-\eta_1-\eta_2),\] where $\eta_1, \eta_2$ are two new parameters $\geq 0$. Then $f$ and $g'$ are meta-consistent.

For a more systematic approach, introduce new parameters $\lambda_1,\lambda_2,\lambda_3 \geq 0$ such that $\lambda_1+\lambda_2+\lambda_3 = 1$ and set
\[
g' = (\theta_0+\theta_1, \lambda_1(\theta_2+\theta_3),\lambda_2(\theta_2+\theta_3),\lambda_3(\theta_2+\theta_3)).
\]
Then $g'$ is saturated and meta-consistent with $f$.
\end{example}
This example suggests the more general result of Proposition~\ref{make-meta-consistent}.

\begin{proposition}\label{make-meta-consistent}
For all pairs of category mappings $I\to M$ and $J\to M$, any two saturated models can be made meta-consistent after reparametrisation.
\end{proposition}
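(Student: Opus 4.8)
The plan is to leave $f$ essentially untouched and to reparametrise $g$ so that its $M$-aggregate is forced to coincide with that of $f$. The mechanism is the ``chain rule'' decomposition of a distribution relative to a category mapping: any distribution $g$ on $J$ is recovered from its aggregate $g_M\in\Delta_M$ together with, for each $k\in M$, the conditional distribution on the fiber $J_k$, an element of the probability simplex $\Delta_{J_k}$ on $J_k$ (here $\Delta_S$ denotes the probability simplex on a finite set $S$, and $J_k\neq\emptyset$ since $q$ is surjective). If we build $g$ by multiplying a \emph{prescribed} aggregate coming from $f$ by free conditional factors, then meta-consistency will hold by construction.

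Concretely, write $f:\Theta\to\Delta_I$ for the given saturated model and $f_M:\Theta\to\Delta_M$ for its aggregate. I would enlarge the parameter space to $\Theta' = \Theta\times\prod_{k\in M}\Delta_{J_k}$, regard $f$ as a model on $\Theta'$ by ignoring the new coordinates, and define the reparametrised $\tilde g$ on $\Theta'$ by
\[
\tilde g_j(\theta, (d^{(k)})_k) = f_{M,q(j)}(\theta)\cdot d^{(q(j))}_j \qquad (j\in J).
\]
Summing over a fiber gives $\tilde g_{M,k} = f_{M,k}\sum_{j\in J_k} d^{(k)}_j = f_{M,k}$, so $\tilde g_M = f_M$ identically on $\Theta'$; hence $f$ and $\tilde g$ are meta-consistent. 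Specialising to $f = (\theta_0,\theta_1,\theta_2,\theta_3)$ with the fibers of the preceding example, the aggregate is $f_M = (\theta_0+\theta_1,\ \theta_2+\theta_3)$ and $\tilde g$ becomes exactly the systematic reparametrisation $g'$ displayed there.

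The one point that genuinely uses the saturation hypothesis — and the main thing to verify — is that $\tilde g$ is still saturated, i.e.\ surjects onto $\Delta_J$. Because $f$ is saturated, $f_M$ is surjective onto $\Delta_M$: aggregation maps $\Delta_I$ onto $\Delta_M$ (spread any $\nu\in\Delta_M$ uniformly over the nonempty fibers), and $f$ already hits all of $\Delta_I$. Hence, given any target $g\in\Delta_J$, I would first pick $\theta$ with $f_M(\theta) = g_M$, then set $d^{(k)}_j = g_j/g_{M,k}$ on each fiber with $g_{M,k} > 0$ and choose $d^{(k)}$ arbitrarily where $g_{M,k} = 0$. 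In either case $\tilde g_j(\theta, d) = g_j$, the vanishing case using $g_j \le g_{M,q(j)} = 0$; so every distribution on $J$ is attained. The fully symmetric statement is handled by the same idea applied to both models at once (introducing a single shared aggregate block $\mu\in\Delta_M$ and conditional blocks $\Delta_{I_k}$, $\Delta_{J_k}$ for each side), and the $f$-factor is trivially still saturated since it is $f$ composed with a coordinate projection.
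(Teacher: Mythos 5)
Your proof is correct and is essentially the paper's own argument: defining $\tilde g_j = f_{M,q(j)}\cdot d^{(q(j))}_j$ is precisely Construction~\ref{model-lift} applied to $J\to M$ with $h=f_M$, your surjectivity check reproduces the proof of Lemma~\ref{lift-is-saturated}, and the paper's proof of Proposition~\ref{make-meta-consistent} explicitly mentions this asymmetric variant in which $f$ is left untouched and only $g$ is reparametrised (its main construction instead lifts a fresh standard saturated model on $M$ to both $I$ and $J$, which is your "fully symmetric" remark). The only minor difference is that you handle the boundary case $g_{M,k}=0$ explicitly, whereas the paper's Lemma~\ref{lift-is-saturated} divides by $a_{M,k}$ without addressing that degenerate case.
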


We prove Proposition~\ref{make-meta-consistent} by chaining together the following constructions.

\begin{construction}\label{model-lift}
\emph{Given a category mapping $I\to M$ and a model $h$ over $M$, we construct a model $f$ over $I$ such that $f_M = h$.}

For $k\in M$, let $\lambda_{k,i}$ be new parameters such that $\sum_{i\in I_k}\lambda_{k,i} = 1$ and for $i\in I_k$, let $f_i=\lambda_{k,i} h_k$. Then $\sum_{i\in I_k} f_i = h_k$ and $\sum_{i\in} f_i = 1$. Therefore, $f$ is a model over $I$ such that $f_M = h$.\qed
\end{construction}

\begin{lemma}\label{lift-is-saturated}
In Construction~\ref{model-lift}, if $h$ is saturated, then $f$ is saturated. 
\end{lemma}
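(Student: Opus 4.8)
The plan is to show that the lifted model $f$, regarded as a map from its enlarged parameter space (the parameters of $h$ together with the new parameters $\lambda_{k,i}$) into $\mathbb{R}^I$, has image equal to the full simplex $\Delta^{|I|-1}$. Since $f$ is a model by Construction~\ref{model-lift}, each $f_i = \lambda_{k,i}\,h_k$ is nonnegative and the $f_i$ sum to $1$, so the image is automatically contained in $\Delta^{|I|-1}$. It therefore suffices to prove surjectivity onto the simplex.

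To do this, I would take an arbitrary target distribution $p=(p_i)_{i\in I}\in\Delta^{|I|-1}$ and exhibit parameters mapping to it. First I would aggregate the target along the category mapping, setting $c_k \coloneqq \sum_{i\in I_k} p_i$ for each $k\in M$. The vector $(c_k)_{k\in M}$ lies in $\Delta^{|M|-1}$, since the $p_i$ are nonnegative and $\sum_{k} c_k = \sum_i p_i = 1$. Because $h$ is saturated, there is a parameter value of $h$ for which $h_k = c_k$ for every $k$.

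Next I would choose the lifting parameters to reproduce $p$ on the nose. For each $k$ with $c_k>0$, set $\lambda_{k,i} = p_i/c_k$ for $i\in I_k$; these are nonnegative, sum to $1$ over $I_k$, and give $f_i = \lambda_{k,i}\,h_k = (p_i/c_k)\,c_k = p_i$. For each $k$ with $c_k=0$, nonnegativity forces $p_i=0$ for all $i\in I_k$, so any admissible assignment of the $\lambda_{k,i}$ (for instance the uniform one) yields $f_i = \lambda_{k,i}\cdot 0 = 0 = p_i$. Assembling these choices produces a parameter value mapping to $p$, which proves surjectivity and hence that $f$ is saturated.

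The only genuine subtlety, and the point I expect to be the main obstacle to a naive argument, is the degenerate fiber $c_k=0$, where the natural formula $\lambda_{k,i}=p_i/c_k$ is undefined. This turns out to be harmless: when an aggregate probability vanishes, every category it covers already carries zero probability in the target, so the values of the corresponding $\lambda_{k,i}$ are immaterial and any valid assignment works. Handling this case explicitly is what makes the surjectivity argument complete.
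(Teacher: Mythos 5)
Your proof is correct and follows essentially the same route as the paper: given a target $(p_i)\in\Delta^{|I|-1}$, use saturation of $h$ to hit the aggregate $(c_k)$ and then set $\lambda_{k,i}=p_i/c_k$. The only difference is that you explicitly treat the degenerate fibers with $c_k=0$, a case the paper's proof passes over silently; your handling of it is right and makes the argument slightly more complete.
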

\begin{proof}
Let $(a_i)\in\Delta^{|I|-1}$. Since $h$ is saturated, there is a choice of parameters such that $a_{M,k}=h_k$ for all $k\in M$. Now choose $\lambda_{k,i}=a_i/a_{M,k}$. With these parameters, $f_i=a_i$ for all $i\in I$.
\end{proof}

\begin{construction}\label{saturated-consistent}
\emph{Given category mappings $I\to M$ and $J\to M$, we construct saturated models over $I$ and $J$ that are meta-consistent with respect to these mappings.}

Let $h$ be the saturated model over $M$ given by the standard parametrisation $h=(\theta_k)_{k\in M}$. Use Construction~\ref{model-lift} to define models $f$ over $I$ and $g$ over $J$ such that $f_M = h = g_M$. By definition, $f$ and $g$ are meta-consistent. Both are saturated by Lemma~\ref{lift-is-saturated}.
\end{construction}

\begin{proof}[Proof of Proposition~\ref{make-meta-consistent}]
Let $f'$ and $g'$ be the given saturated models on $I$ and $J$, respectively.
Use Construction~\ref{saturated-consistent} to define saturated models $f$ on $I$ and $g$ on $J$ that are meta-consistent. These are the required reparametrisations of $f'$ and $g'$. Should it be preferred to leave $f'$ as-is and to only reparametrize $g'$, apply Construction~\ref{model-lift} to $J\to M$ with $h=g'_M$ to get the desired reparametrisation $g$.  
\end{proof}

\begin{example}\label{saturated-consistent-example}
Construction~\ref{saturated-consistent} gives meta-consistent $f$ and $g$ with respect to any given pair of category mappings $I\to M$ and $J\to M$. What is their meta-Markov combination? By construction, for all $k\in M$, $i\in I_k$ and $j\in J_k$ we have
\begin{equation}\label{saturated-markov}
(f\star g)_{i,j} = \lambda_{k,i}\mu_{k,j}h_k
\end{equation}
where $\lambda_{k,i}$, $\mu_{k,j}$, $h_k$ are non-negative with $\sum_{i\in I_k}\lambda_{k,i} = \sum_{j\in J_k}\mu_{k,j} = \sum_k h_k = 1$. It follows that $f\star g$ is a special kind of \emph{mixture} of independence models, which we call a ``pure mixture''. More precisely, we have for each $k$ an independence model $\Delta^{|I_k|-1}\times \Delta^{|J_k|-1}$ embedded into $\Delta^{|I_k||J_k|-1}$ via the Segre embedding. Each of these $\Delta^{|I_k||J_k|-1}$ is taken to be the facet of a larger simplex, pairwise disjoint from all other such facets. Then the $|M|$-mixture of these independence models is taken.
\end{example}

\begin{example}
For a more concrete example, let $I\to M$ and $J\to M$ be as the start of the section, i.e.\ with 
$I_0=\{0,1\}$, $I_1=\{2,3\}$, $J_0=\{0\}$, and $J_1=\{1,2,3\}$.
Then
\begin{align*}
f &= (\lambda_{00}h_0, \lambda_{01}h_0, \lambda_{12}h_1,\lambda_{13}h_1),\\
g &= (\mu_{00}h_0, \mu_{11}h_1, \mu_{12}h_1,\mu_{13}h_1),\\
f\star g &= (\lambda_{00}\mu_{00}h_0, \lambda_{01}\mu_{00}h_0,\\
&\phantom{= ( } \lambda_{12}\mu_{11}h_1, \lambda_{12}\mu_{12}h_1, \lambda_{12}\mu_{13}h_1,
  \lambda_{13}\mu_{11}h_1, \lambda_{13}\mu_{12}h_1, \lambda_{13}\mu_{13}h_1).
\end{align*}
Thus, $f\star g$ is the ``pure mixture'' of $\Delta_1$ and $\Delta_1\times \Delta_2$ as described above.
\end{example}

\begin{example}
Let us analyze variant (3a) of Definition~\ref{markov-variants}  with respect to the standard saturated models $f = (\theta_i)$ and $g = (\eta_j)$. Since $f$ and $g$ share no parameters, there is no difference between (3a) and ($3'$a) of Definition~\ref{restricted-markov-variants}. The parameters $(\theta,\eta)$ of $f\lowerstar g$ map surjectively onto the parameters $(\lambda_i,\mu_j,h_k)$ of Example~\ref{saturated-consistent-example} by taking $h_k = \sum_{i\in I_k} \theta_k$, $\lambda_i = \theta_i/h_k$, $\mu_i = \eta_j/h_k$. Furthermore, for each $k$, the entries of $f\lowerstar g$ over $I_k\times J_k$ satisfy the equations of the independence model $\Delta^{|I_k|-1}\times \Delta^{|J_k|-1}$. Therefore, the image of $f\lowerstar g$ is contained in the image of $f\star g$. Because of the surjective mapping of parametrs, $\im(f\star g) = \im(f\lowerstar g)$. So, in this case, $(2) = (3\text{a}) = (3'\text{a})$.
\end{example}

\subsection{Regular exponential families}
Discrete, regular exponential families $f = (f_i)_{i\in I}$ differ from saturated models in that a) the interior of $\Delta^{|I|-1}$ equals the image of $f:\Theta\to \Delta^{|I|-1}$ and b) the dimension of the parameter space $\Theta$ equals $|I|-1$, so that $\Theta$ can be transformed into the space of canonical parameters of the family.
Since the constructions in Subsection~\ref{sec:saturated-models} preserve these properties, the discussion of Markov combinations of regular exponential families follows the lines of that subsection. In particular, the Markov combination of two exponential families over $I$ and $J$ with metacategories $M$ is the interior of a pure mixture of independence models. The dimension of its parameter space is computed as follows. Following Equation~\eqref{saturated-markov}, for each $k\in M$ there are $1 + |I_k| + |J_k|$ parameters and two linear constraints. Additionally, there is one linear constraint for the $h_k$. Summing over $k$ yields $|M|+|I|+|J| - 2|M| - 1 = |I|+|J|-|M|-1$ free parameters. The dimension of the ambient space is $\sum_{k\in M}|I_k|\cdot|J_k|-1$ as explained after Definition~\ref{mapping-product}.

\subsection{Staged Tree Models}
A \emph{staged tree model} is a probabilistic graphical model for finite sample space processes introduced in~\cite{smith-anderson} to model non-symmetric independence and causality~\cite{thwaites} among other applications.  It is supported on a \emph{staged tree} and is a curved exponential family~\cite{goergen-leonelli}. For further details on the definition and usage of staged trees, see~\cite{goergen-bigatti}, \cite{collazo-smith}, \cite{leonelli-varando}, \cite{carli-leonelli}, and references therein.
Here, we recall that a staged tree is 
 defined by a directed rooted tree   $T=(V,E) $ with a finite set of vertices $V$ and whose edges $E$ are  labeled by the elements of a label set $L$. 
For each vertex $v\in V$, the set of edges $ E(v)=\{e= (v,w)\in E \ | \ w\in V\} $ is called a \emph{floret}  and the labeling $\{\theta_e \in L \mid e\in E\}$ must satisfy the following condition:
\begin{itemize}
\item for any two florets $E(v)$ and $E(v')$, the sets \[\{\theta_e\mid e\in E(v)\} \quad\text{and}\quad \{\theta_e\mid e\in E(v')\}\] are either equal or disjoint.
\end{itemize}

Figure~\ref{tree1} depicts a staged tree with edge labels $\theta_0, \theta_1, \theta_2, \theta_3$ which, when interpreted as transition probabilities, satisfy the linear condition $\theta_0+\theta_1=\theta_2+\theta_3=1$ and are non-negative.

\begin{figure}
\includegraphics[width=0.7\textwidth,trim={0 10em 0 7em},clip]{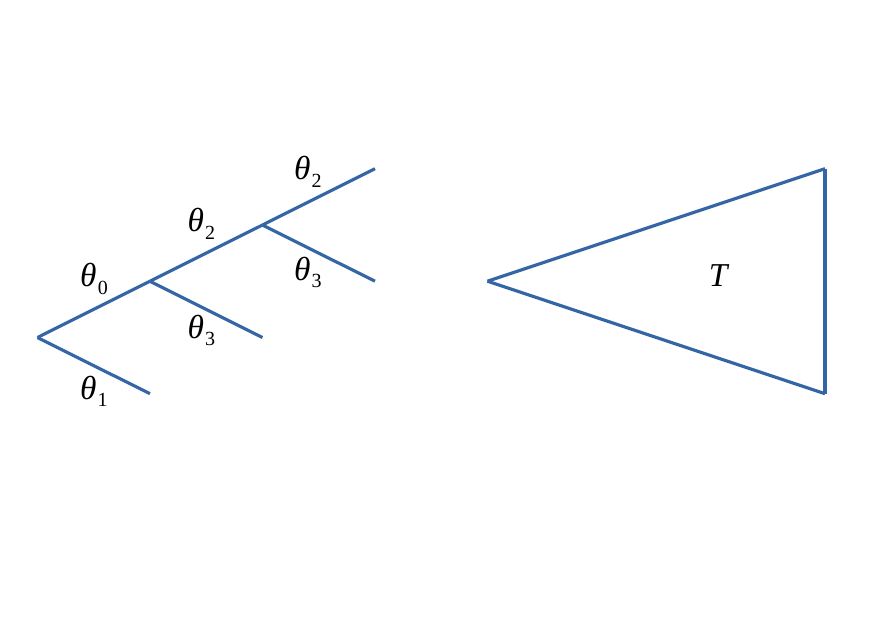}
\caption{A typical staged tree $T$. On the left, the structure of the tree and its edge labels. The tree has three florets, two of which have the same edge labels. On the right, a schematic representation for the whole tree.} \label{tree1}
\end{figure}
  
A discrete parametric statistical model based on a stage tree $T$, called \emph{a staged tree model}, can be defined on the set $I$ of all root-to-leaf paths of $T$ by taking the labels $\theta_e\in [0,1]$ as parameters and 
setting 
\[
\sum_{e\in E(v) } \theta_e = 1 \text{ for all } v\in V
\quad \text{and} \quad
f_i = \prod_{e\in E(i)} \theta_e \text{ for all } i\in I 
\]
where $E(i)$ is the set of edges in the root-to-leaf path $i\in I$.
Whereas the edge labels $\theta_e$ play the role of transition probabilities and are the model parameters, the $f_i$ play the role of joint probabilities and define the statistical model itself.

Every staged tree $T$ can be decomposed as a collection of subtrees $T_1,\dotsc,T_m$ attached to the leaves of a root tree $S$. We denote such a decomposition by
\begin{equation}\label{staged-decomposition}
(S, T_1, \dotsc, T_m).
\end{equation}
For example, the root tree $S$ can be the root vertex of $T$, or a subtree $T_k$ can be a leaf of $T$.
The subtrees $T_k$ are staged trees with disjoint vertex sets $V(T_k)\subset V(T) $ and edge sets $E(T_k)\subset E(T) $, $k=1,\ldots,m$. For the vertex and edge sets of $S$ we write $V(S)$ and $E(S)$, respectively. There are many decompositions for every given staged tree, and every such decomposition induces a mapping $\{\text{Leaves of } T\}\to\{\text{Leaves of } S\}$ by assigning to each leaf of $T$ its closest ancestor among the leaves of $S$.
This defines an equivalent category mapping $p:\{\text{Root-to-leaf paths of } T\}\to \{\text{Root-to-leaf paths of } S\}$
that assigns to $i\in I$ the unique root-to-leaf path $p(i)$ of $S$ contained in $i$.

Any decomposition~\eqref{staged-decomposition} induces a factorisation of the model coordinates: for every root-to-leaf path $i$ of $T$,
\begin{equation}\label{coord-decompose}
f_i = \left(\prod_{e\in E(i)\cap E(S)} \theta_e\right) \left(\prod_{e\in E(i)\cap E(T_{p(i)})} \theta_e\right) = s_{p(i)}\, t_i,
\end{equation}
where $T_{p(i)}$ is the unique subtree in the decomposition that contains a subpath of $i\in I$ and $s_{p(i)}$, $t_i$ are the factors in the first and second parentheses, respectively. Note that $s_{p(i)}$ depends only on 
$p(i)\in\{\text{Root-to-leaf paths of } S\}$.

Let $T$ and $T'$ be staged trees and $(S, T_1,\dotsc,T_m)$ and $(S',T_1',\dotsc,T_{m}')$ decompositions of $T$ and $T'$ with the same number of subtrees. A pair of category mappings \[\{\text{Leaves of } T\}\to\{\text{Leaves of } S\}\leftarrow \{\text{Leaves of } T'\}\]
can be realised via a bijection $\varphi : \{\text{Leaves of } S\}\to \{\text{Leaves of } S'\}$ which we denote by $k\mapsto k'$.
Then for $M = \{\text{Leaves of } S\}$
we have the following characterisation of meta-consistence, see Figure~\ref{tree2} for a schematic representation.

\begin{figure}
\includegraphics[width=0.7\textwidth,trim={0 7em 0 7em},clip]{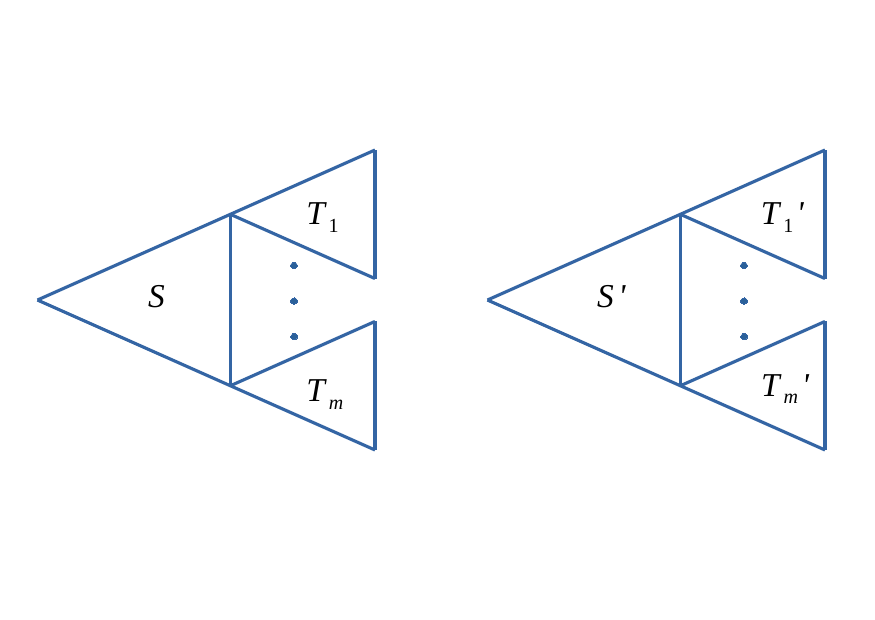}
\caption{Meta-consistent decompositions of two staged trees, represented as $m$ subtrees $T_i$ resp.\ $T_i'$ attached to the $m$ leaves of a rooted subtree $S$ resp.\ $S'$. The trees $S$ and $S'$ are not necessarily equal, but have the same number of leaves and represent the same statistical model on $m$ categories.} \label{tree2}
\end{figure}

\begin{proposition}
The staged tree models of $T$ and $T'$ are meta-consistent if and only if the probabilities of the root-to-leaf paths of the subtrees $S$ and $S'$ are equal in the model, that is
\[
\prod_{e\in E(k)}\theta_e = \prod_{e'\in E(k')}\theta'_{e'} \quad
\text{for all root-to-leaf paths } k \text{ of } S.
\]
\end{proposition}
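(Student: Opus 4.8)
The plan is to reduce the statement to a computation of the aggregates $f_M$ and $g_{M'}$ of the two staged tree models under the induced category mappings, and to recognise these aggregates as the staged tree models of the root trees $S$ and $S'$. Once this identification is in place, meta-consistency---which by Definition~\ref{markov-variants}(2) asks that the two aggregates coincide, here read off under the bijection $\varphi\colon k\mapsto k'$---becomes precisely the asserted equality of root-to-leaf path probabilities.

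First I would compute $f_M$. Using the factorisation~\eqref{coord-decompose}, every coordinate splits as $f_i = s_{p(i)}\,t_i$, where $s_{p(i)} = \prod_{e\in E(p(i))}\theta_e$ depends only on the root-to-leaf path $p(i)$ of $S$, and $t_i = \prod_{e\in E(i)\cap E(T_{p(i)})}\theta_e$ is the probability of the corresponding path inside the subtree $T_{p(i)}$. Aggregating over the fibre $I_k = \{i\mid p(i)=k\}$ and pulling out the common factor gives
\[
f_{M,k} = \sum_{i\in I_k} f_i = s_k\sum_{i\in I_k} t_i .
\]
The decisive observation is that $\sum_{i\in I_k} t_i = 1$: the paths $i$ with $p(i)=k$ are exactly the root-to-leaf paths of the subtree $T_k$, and the numbers $t_i$ are the coordinates of the staged tree model of $T_k$, so they sum to one by the floret relations $\sum_{e\in E(v)}\theta_e = 1$. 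Hence $f_{M,k} = s_k = \prod_{e\in E(k)}\theta_e$, which means $f_M$ is exactly the staged tree model of $S$. The identical argument applied to the decomposition $(S',T_1',\dotsc,T_m')$ yields $g_{M',k'} = \prod_{e'\in E(k')}\theta'_{e'}$, so $g_{M'}$ is the staged tree model of $S'$.

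With both aggregates identified, the conclusion is immediate. The two models are meta-consistent exactly when their aggregates agree as parametric models on $M$, that is, when $f_{M,k} = g_{M',k'}$ for every leaf $k$ of $S$ under the identification $\varphi$. Substituting the two expressions just obtained turns this into
\[
\prod_{e\in E(k)}\theta_e = \prod_{e'\in E(k')}\theta'_{e'} \quad\text{for all root-to-leaf paths } k \text{ of } S,
\]
and reading the substitution backwards gives the converse, establishing the biconditional.

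I expect the main obstacle to lie in the normalisation step $\sum_{i\in I_k} t_i = 1$, namely in making rigorous that restricting the product formula to the edges in $E(T_k)$ genuinely reproduces the staged tree model of $T_k$---complete with its own sum-to-one floret conditions---rather than an unnormalised quantity. This rests on the decomposition~\eqref{staged-decomposition} having pairwise disjoint subtree edge sets and on each $T_k$ being a bona fide staged tree rooted at the leaf $k$ of $S$; I would spell this out before performing the aggregation, since everything else is a direct substitution.
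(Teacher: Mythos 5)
Your proposal is correct and follows essentially the same route as the paper's proof: factor each coordinate as $f_i = s_{p(i)}\,t_i$ via~\eqref{coord-decompose}, show $\sum_{p(i)=k} t_i = 1$ so that $f_{M,k}=s_k$, and then read off meta-consistency as the equality $s_k = s'_{\varphi(k)}$. Your justification of the normalisation step (the $t_i$ are the coordinates of the staged tree model of $T_k$, hence sum to one by the floret relations) is exactly one of the two arguments the paper sketches for that same step.
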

With respect to the factorisations $f_i = s_{p(i)}\,t_i$ and $f'_j=s'_{q(j)}\,t'_j$ as in~\eqref{coord-decompose}, this condition is equivalent to 
$s_{p(i)} =s'_{q(j)}$
whenever $q(j) = \varphi(p(i))\in M$.

\begin{proof}
Let $f$ be the model of $T$
and $f'$ the model of $T'$.
For  root-to-leaf paths $i$
of $T$ and $j$ of $T'$,
write $f_i = s_{p(i)}\, t_i$
and $f'_j = s'_{q(j)}\, t'_j$
as in~\eqref{coord-decompose}.
For all
root-to-leaf paths
$k$ of $S$ we have $\sum_{p(i)=k} t_i=1$. This can be shown either inductively, starting with the case where $T_i$ has height one (a floret) or by noting that $t_i$ is the conditional probability of the path $i$ given that $i$
contains the path $p(i)$.
Since  $s_{p(i)}$ takes the same value, say $s_k$, for all $i$ such that $p(i)=k$, we have
\[
f_{M,k} = \sum_{p(i)=k} s_{p(i)}\, t_i = s_k \sum_{p(i)=k} t_i = s_k.
\]
Therefore, $f$ and $f'$ are meta-consistent if and only if $s_k = s'_{\varphi(k)}$ for all root-to-leaf paths $k$ of $S$.
We conclude the proof by noting that $s_k=\prod_{e\in E(k)} \theta_e$, and similarly for $s_{\varphi(k)}$.
\end{proof}

\begin{remark}
The subtrees $S$ and $S'$ need not be equal for the models of $T$ and $T'$ to be meta-consistent, but they should have the same number of leaves and consistent probabilities. For instance, suppose that $S$ is a binary tree with four leaves representing the joint probability space of two binary random variables $X_1$ and $X_2$. Switching the order of the variables and changing the edge labels accordingly yields a tree $S'$ different from $S$ but with the same root-to-leaf paths.
\end{remark}

Let $T$ and $T'$ be meta-consistent, 
 $(S,T_1,\ldots,T_m)$ and $(S',T_1',\ldots,T_m')$ their decompositions, and $\varphi$ the bijection from the leaves of $S$ to the leaves of $S'$. Then their Markov combination $T''$ is obtained by taking $T$ and attaching a copy of $T'_{\varphi(j)}$ to each leaf of $T_j$, for $j=1,\ldots,m$. The staged tree $T''$ can therefore be decomposed as $(S,T_1'',\dotsc,T_m'')$ where $T_j''=(T_j, T_{\varphi(j)}',\dotsc, T_{\varphi(j)}')$.

To see this, for each root-to-leaf path $i$ in $T$ write its probability as $f_i = s_{p(i)}\, t_i$ and similarly write $f'_j = s'_{q(j)} t'_j$ for each root-to-leaf path $j$ of $T'$ in the same metacategory of $i$. Then, 
$s_{p(i)} = s'_{q(j)}$ because of meta-consistency and thus $(f\star f')_{i,j} = 
f_i\, f'_j / s_{p(i)} = t_i\, s_{p(i)}\,t'_{j}$ where the first equality follows by definition of $\star$.

Note that, depending on whether we prefer the formula for a right- or left Markov combination, the roles of $T$ and $T'$ can be reversed: an equally valid staged tree representation for their Markov combination would be
\[
(S',(T'_1, T_1,\dotsc,T_1),\dotsc,(T'_m,T_m,\dotsc,T_m)).
\]
This results in combinatorially distinct trees that are nevertheless statistically equivalent (they give rise to the same staged tree model, see~\cite{goergen-bigatti}). A schematic depiction of these Markov combinations is found in Figure~\ref{tree3}.

\begin{figure}
\includegraphics[width=0.7\textwidth,trim={0 4em 0 4em},clip]{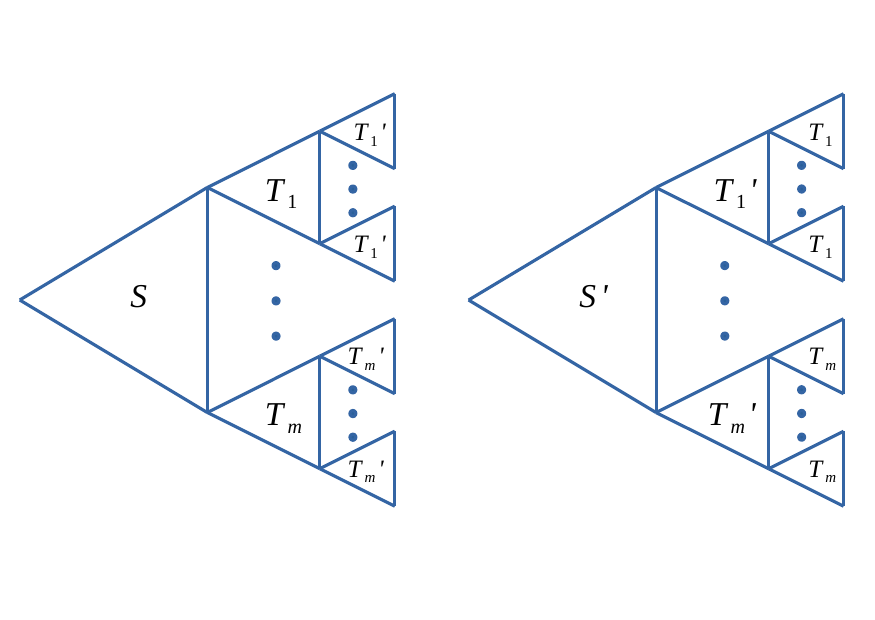}
\caption{Two statistically equivalent Markov combinations of the two staged trees in Figure~\ref{tree2}. On the left, the ``tail'' subtrees of the second tree are attached to the leaves of the first tree. On the right, the roles of the first and second tree are reversed.}\label{tree3}
\end{figure}

\section{Theorems}\label{sec:theorems}

\subsection{Model invariance}

To formulate the principle of model invariance as explained e.g.\ in~\cite[\S 6.4]{casella-berger}, one
starts by considering a group $G$ acting on the set of elementary events. In the finite discrete case, each element of the group $G$ gives a permutation of the set of categories $I$.
We write this action as $G\times I\to I$, $(\alpha,i)\mapsto \alpha\cdot i$. This induces an action on $\Delta^I$ by
\[
(\alpha \cdot v)_i \coloneqq v_{\alpha^{-1}(i)}
\]
for all $\alpha\in G$, $v\in\Delta^I$, $i\in I$.

\begin{definition}
Let $G\times I\to I$ be a group action and $\alpha\in G$. A statistical model $f:\Theta\to \Delta^I$ is \emph{$\alpha$-invariant} if there exists an injective function $\overline\alpha:\Theta\to \Theta$ such that $\alpha\circ f = f\circ\overline\alpha$. That is: $f(\theta)_{\alpha^{-1}(i)} = f(\overline\alpha(\theta))_i$ for all $\theta\in\Theta$, $i\in I$. The model $f$ is \emph{$G$-invariant} if it is $\alpha$-invariant for all $\alpha\in G$.
\end{definition}

\begin{example}
Consider the binomial model $f_i(\theta) = \binom{i}{n}\theta^i(1-\theta)^{n-i}$ on $I=\{0,\dotsc,n\}$. The group $\mathbb Z/2=\{1,\tau\}$ acts on $I$ by $\tau\cdot i = n-i$. The model $f$ is $\mathbb Z/2$-invariant via the function $\overline \tau(\theta)\coloneqq 1-\theta$.
\end{example}

If it exists, the maximum likelihood estimator is always $G$-invariant for any $G$.

We now bring the concept of invariance to Markov combinations. First we need to ensure that the group operation is compatible with the chosen meta-categorisation.

\begin{definition}
Let $p:I\to M$ and $q:J\to M$ be category mappings and let $G$ be a group acting on both $I$ and $J$. The group action is \emph{compatible} with $(p,q)$ if for all $i\in I$ and $j\in J$, if $p(i)=q(j)$ then $p(\alpha\cdot i)=q(\alpha\cdot j)$. 
\end{definition}

\begin{remark}
If the action of $G$ is compatible with $(p,q)$, then there is an induced action of $G$ on $M$ given by $\alpha\cdot p(i)\coloneqq p(\alpha\cdot i).$ This is well-defined since if $p(i)=p(i')$ then $p(\alpha\cdot i)=q(\alpha\cdot j)=p(\alpha\cdot i')$, where $j\in J$ is any element such that $q(j)=p(i)$.
\end{remark}

\begin{definition}
Let $G$ be a group acting on two sets of categories $I$ and $J$. Two statistical models $f:\Theta\to\Delta^I$ and $g:\Theta\to\Delta^J$ are \emph{jointly $G$-invariant} if, for all $\alpha\in G$, both are $\alpha$-invariant via the same injective function $\overline\alpha:\Theta\to\Theta$.
\end{definition}

\begin{theorem}
Let $p: I\to M$ and $q: J\to M$ be category mappings, $G$ a group operating on $I$ and $J$ via a group action compatible with $(p,q)$, and $f:\Theta\to \Delta^I$, $g:\Theta\to\Delta^J$ two $G$-invariant models.
\begin{enumerate}
\item There is an induced action of $G$ on ${I\times_M J}$ given by $\alpha\cdot(i,j)=(\alpha(i),\alpha(j))$.
\item If $f$ and $g$ are jointly $G$-invariant and meta-consistent, then $f\star g$ is $G$-invariant.
\item The lower Markov combination $f\lowerstar g$ is $G$-invariant.
\item The upper Markov combination $f\upperstar g$ is $G$-invariant.
\item The super Markov combination $f\otimes g$ is $G$-invariant. If $h:\Theta\to\Delta^{M}$ is a $G$-invariant model, then the structured super Markov combination $f\otimes_h g$ is $G$-invariant. 
\end{enumerate}
\end{theorem}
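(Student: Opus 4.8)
The plan is to isolate a single lemma that powers every part: \emph{aggregation preserves invariance}. Concretely, if a model $f:\Theta\to\Delta^I$ is $\alpha$-invariant via $\overline\alpha$ and the $G$-action is compatible with $p:I\to M$, then the aggregate $f_M:\Theta\to\Delta^M$ is $\alpha$-invariant via the \emph{same} $\overline\alpha$, with respect to the induced action $\alpha\cdot p(i)=p(\alpha\cdot i)$ on $M$. To prove this I would first record that compatibility gives $p(\alpha^{-1}i)=\alpha^{-1}\cdot p(i)$ and that $i\mapsto\alpha\cdot i$ restricts to a bijection $I_{\alpha^{-1}k}\to I_k$ between aggregate categories. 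Then
\[
f_{M,\alpha^{-1}k}(\theta)=\sum_{i\in I_{\alpha^{-1}k}}f_i(\theta)=\sum_{i\in I_k}f_{\alpha^{-1}i}(\theta)=\sum_{i\in I_k}f_i(\overline\alpha(\theta))=f_{M,k}(\overline\alpha(\theta)),
\]
where the middle equality reindexes by the bijection and the third uses $\alpha$-invariance of $f$. Part (1) is immediate and should be dispatched first: if $p(i)=q(j)$ then compatibility gives $p(\alpha\cdot i)=q(\alpha\cdot j)$, so $(\alpha\cdot i,\alpha\cdot j)\in I\times_M J$, and the group axioms hold because the action is the given one in each coordinate.

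With the lemma in hand, each remaining part is a direct substitution into the defining formula. For (2), I would use that $f$ and $g$ share the reparametrisation $\overline\alpha$ (joint invariance) and compute, for $(i,j)\in I\times_M J$,
\[
(f\star g)(\theta)_{\alpha^{-1}(i,j)}=\frac{f_{\alpha^{-1}i}(\theta)\,g_{\alpha^{-1}j}(\theta)}{f_{M,\alpha^{-1}p(i)}(\theta)}=\frac{f_i(\overline\alpha(\theta))\,g_j(\overline\alpha(\theta))}{f_{M,p(i)}(\overline\alpha(\theta))}=(f\star g)(\overline\alpha(\theta))_{i,j},
\]
using $\alpha^{-1}(i,j)=(\alpha^{-1}i,\alpha^{-1}j)$ from (1), invariance of $f,g$ for the numerator, and the lemma for the denominator. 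For (3) and (4) the combinations have independent parameters, so joint invariance is not needed: I would let $\overline\alpha$ act coordinatewise by $\overline\alpha_f$ on the $f$-parameter and $\overline\alpha_g$ on the $g$-parameter (fixing the flag in the upper case), which is injective as a product of injective maps. The one extra check, specific to the lower combination, is that this map preserves the consistency locus $\Theta'$: the lemma gives $f_M(\overline\alpha_f(\theta))=\alpha\cdot f_M(\theta)$ and $g_M(\overline\alpha_g(\theta))=\alpha\cdot g_M(\theta)$, so $f_M(\theta_1)=g_M(\theta_2)$ forces $f_M(\overline\alpha_f(\theta_1))=g_M(\overline\alpha_g(\theta_2))$. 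The verification then proceeds as in (2), applied to the fixed consistent pair $f(\theta_1),g(\theta_2)$; for (4) one repeats the computation separately for the two denominators $f_{M,p(i)}$ (flag $0$) and $g_{M,q(j)}$ (flag $1$).

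Part (5) is the same substitution but with one bookkeeping wrinkle that I expect to be the only genuinely delicate point. In the super Markov combination the middle parameter $\theta_2$ feeds the factor $f_{M,p(i)}(\theta_2)$ when the flag is $0$ and $g_{M,q(j)}(\theta_2)$ when the flag is $1$, so the reparametrisation of $\theta_2$ must depend on the flag: I would set $\overline\alpha(\theta_1,\theta_2,\theta_3,b)=(\overline\alpha_f(\theta_1),\overline\alpha^{(b)}(\theta_2),\overline\alpha_g(\theta_3),b)$ with $\overline\alpha^{(0)}=\overline\alpha_f$ and $\overline\alpha^{(1)}=\overline\alpha_g$. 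Since $b$ is fixed and each component is injective, $\overline\alpha$ is injective, and applying the lemma to all three aggregate factors $f_{M,p(i)}(\theta_1)$, $f_{M,p(i)}(\theta_2)$ (or $g_{M,q(j)}(\theta_2)$), $g_{M,q(j)}(\theta_3)$, together with the invariance of $f,g$ for the remaining factors, yields the claim. For the structured super Markov combination the flag disappears and $h$ supplies the middle factor $h_k(\theta_2)$; using the $G$-invariance of $h$ via some $\overline\alpha_h$ in place of the aggregate lemma for the middle factor, and $\alpha^{-1}k=p(\alpha^{-1}i)=q(\alpha^{-1}j)$, the identity $(f\otimes_h g)_{\alpha^{-1}(i,j)}(\theta_1,\theta_2,\theta_3)=(f\otimes_h g)_{i,j}(\overline\alpha_f(\theta_1),\overline\alpha_h(\theta_2),\overline\alpha_g(\theta_3))$ falls out by the same pattern.
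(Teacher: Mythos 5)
Your proposal is correct and follows essentially the same route as the paper: the key identity $f_M(\overline\alpha(\theta))_k = f_M(\theta)_{\alpha^{-1}(k)}$, which you isolate as a standalone lemma, is exactly what the paper establishes inline in part (2) and reuses throughout, and your reparametrisations for parts (3)--(5) --- coordinatewise maps, identity on the flag, and the flag-dependent choice $\overline\alpha^{(0)}=\overline\alpha_f$, $\overline\alpha^{(1)}=\overline\alpha_g$ for the middle parameter of the super Markov combination --- coincide with the paper's. Your explicit check that the consistency locus $\Theta'$ is preserved in part (3) matches the paper's argument (and even repairs a small typo there, where $g(\theta_1)$ should read $g(\theta_2)$).
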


\newcommand{\alphabar}{\overline\alpha}
\newcommand{\id}{\operatorname{id}}

\begin{proof}
(1) The given operation is well-defined because the action of $G$ is compatible with $(p,q)$. The axioms of a group action hold, i.e.\ for $\beta,\alpha\in G$, $\mathbf 1\in G$ the neutral element, and $x\in I\times_M J$ we have
\begin{align*}
\mathbf{1}\cdot x &= x,\\
\beta\cdot (\alpha \cdot x) &= (\beta\cdot \alpha)\cdot x,
\end{align*}
because $G$ is a group action on $I$ and $J$.

\medskip
(2) Let $\alpha\in G$ and $f$, $g$ $\alpha$-invariant via the injective map $\overline\alpha:\Theta\to\Theta$. Then, using the induced $G$-action on $M$ we get
\[
f(\overline\alpha\,\theta)_{M,k}=\sum_{p(i)=k}f(\theta)_{\alpha^{-1}(i)} = \sum_{p(i) = \alpha^{-1}(k)} f(\theta)_i = f_{M,\alpha^{-1}(k)}(\theta).
\]
Therefore,
\[
(f\star g)(\overline \alpha\,\theta)_{i,j} =
\frac{f(\overline\alpha\,\theta)_{i}\, g(\overline\alpha\,\theta)_{j}}{f(\overline\alpha\,\theta)_{M,k}} =
\frac{f(\theta)_{\alpha^{-1}(i)}\, g(\theta)_{\alpha^{-1}(j)}}{f(\theta)_{M,\alpha^{-1}(k)}} =
(f\star g)(\theta)_{\alpha^{-1}(i,j)}.
\]

\medskip
(3) Let $\alpha\in G$, $f$ be $\alpha$-invariant via $\overline \alpha_1:\Theta\to\Theta$ and $g$ be $\alpha$-invariant via $\overline\alpha_2:\Theta\to\Theta$. Let $\overline\alpha = (\overline\alpha_1,\overline\alpha_2):\Theta\times\Theta\to\Theta\times\Theta.$ Then $\overline \alpha$ is injective and if $(\theta_1,\theta_2)\in\Theta\times \Theta$ with $f(\theta_1)_{M,k} = g(\theta_2)_{M,k}$ then
\[
f(\overline\alpha_1(\theta_1))_{M,k} = f(\theta_1)_{M,\alpha^{-1}(k)} = g(\theta_1)_{M,\alpha^{-1}(k)}= g(\overline\alpha_2 (\theta_2))_{M,k},
\]
so $\overline\alpha(\theta_1,\theta_2)$ is also a parameter of $f\lowerstar g$. We compute
\[
(f\lowerstar g)(\overline\alpha(\theta_1,\theta_2))_{i,j}
= \frac{f(\theta_1)_{\alpha^{-1}(i)}\, g(\theta_2)_{\alpha^{-1}(j)}}{f(\theta_1)_{M,\alpha^{-1}(k)}}
= (f\star g)(\theta_1,\theta_2)_{\alpha^{-1}(i,j)}.
\]

\medskip
(4) Let $\alpha\in G$ and $f$, $g$ be $\alpha$-invariant via $\overline\alpha_1$, $\overline\alpha_2 : \Theta\to\Theta$, respectively. Define $\overline\alpha=(\alphabar_1,\alphabar_2,\id):\Theta\times\Theta\times\{0,1\}\to\Theta\times\Theta\times\{0,1\}$.
Then, for $\ell\in\{0,1\}$,
\[
(f\upperstar g)(\overline\alpha(\theta_1,\theta_2,\ell))_{i,j}
= (f\upperstar g)(\theta_1,\theta_2,\ell)_{\alpha^{-1}(i,j)}.
\]

\medskip

(5)
Let $\alpha\in G$ and $f$, $g$ be $\alpha$-invariant via $\overline\alpha_1$, $\overline\alpha_3 : \Theta\to\Theta$, respectively. For the Super-Markov combination, define $\overline\alpha:\Theta^3\times\{0,1\}\to \Theta^3\times\{0,1\}$ by
\[
\overline\alpha(\theta_1,\theta_2,\theta_3,\ell)
= \begin{cases}
(\alphabar_1(\theta_1),\alphabar_1(\theta_2),\alphabar_3(\theta_3),\ell) \text{ if } \ell=0,\\
(\alphabar_1(\theta_1),\alphabar_3(\theta_2),\alphabar_3(\theta_3),\ell) \text{ if } \ell=1.
\end{cases}
\]
Then $f\otimes g$ is $\alpha$-invariant via $\overline \alpha$.

If in addition $h$ is $\alpha$-invariant via $\alphabar_2$, then $f\otimes_h g$ is $\alpha$-invariant via
\[
(\alphabar_1,\alphabar_2,\alphabar_3):\Theta^3\to\Theta^3.\qedhere
\]
\end{proof}

\subsection{Maximum likelihood estimation of the Markov combination of saturated models} In general, it is difficult to derive maximum likelihood estimators (MLE) of a Markov combination given MLEs of its components.

For the Markov combination variants where parameters are shared between the components, one runs into the problem of estimating incompatible parameters. For instance, consider the meta-Markov combination of distributions $f:\Theta\to\Delta^I$ and $g:\Theta\to\Delta^I$ where $M=\{1\}$. Then $f\star g$ is the product $f_i(\theta)\,g_j(\theta)$. From MLEs $\varphi:\Delta^{I}\to \Theta$ for $f$ and $\psi:\Delta^J\to\Theta$ for $g$ we could estimate $\theta_1 = \varphi(u)$ and $\theta_2 = \psi(v)$, where 
\[
u_i = \sum_{j\in J} x_{ij} \quad \text{and}\quad
v_j = \sum_{i\in I} x_{ij}. 
\]
However, there does not necessarily exist $\theta_0$ such that $f(\theta_0)g(\theta_0) = f(\theta_1)g(\theta_2)$. 

Even in the Markov combination variants with separate parameters for each component, one runs into problems because of the conditional distributions present. For instance, for the structured Super-Markov
\[
\frac{f_i(\theta_1)}{f_{M,k}(\theta_1)}\cdot h_k(\theta_2)\cdot \frac{g_j(\theta_3)}{g_{M,k}(\theta_3)}.
\]
one could estimate $\theta_1$, $\theta_2$ and $\theta_3$ using MLEs for $f$, $h$, and $g$, respectively. However, an MLE for $f$ is not necessarily an MLE for the conditional distribution $f_i/f_{M,k}$ for all $k$. Rather, one would need estimators $\varphi_k$ for each of the conditional distributions $f_i/f_{M,k}$, which would again  estimate potentially incompatible parameters $\theta_{1,k}$.

It is therefore our impression that MLEs for Markov combinations can only be found on a case-by-case basis. To conclude this subsection, we give such an MLE when $f$ and $g$ are saturated.

\begin{theorem}\label{rational-mle}
Let $I\to M$ and $J\to M$ be category mappings. Let $\mathcal M\subseteq \Delta^{I\times J}$ be the Markov combination of the saturated models $\Delta^I$ and $\Delta^J$ as described in Example~\ref{saturated-consistent-example}. For $(x_{ij})\in\Delta^{I\times_M J}$, let
\begin{align*}
u_i &= \sum_{j\in J} x_{ij}\quad (i\in I),\\
v_j &= \sum_{i\in I} x_{ij}\quad (j\in J),\\
{\hat x}_{ij} &= \frac{u_i v_j}{u_{M,k} \overline x},
\end{align*}
where $\overline x = \sum_{ij} x_{ij}$.
Then $\varphi: x\mapsto \hat x$ is an MLE for $\mathcal M$.
\end{theorem}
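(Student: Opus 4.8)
The plan is to exploit the explicit parametrisation of $\mathcal M$ from Example~\ref{saturated-consistent-example} to split the log-likelihood into independent pieces, each of which reduces to an elementary multinomial maximisation. First I would recall from Equation~\eqref{saturated-markov} that a point $P\in\mathcal M$ has the form $P_{ij} = \lambda_{k,i}\,\mu_{k,j}\,h_k$ for $k = p(i) = q(j)$, where the parameters are nonnegative and satisfy $\sum_{i\in I_k}\lambda_{k,i} = \sum_{j\in J_k}\mu_{k,j} = \sum_{k\in M} h_k = 1$. Since this parametrisation is surjective onto $\mathcal M$, maximising the log-likelihood $\ell(P) = \sum_{(i,j)\in I\times_M J} x_{ij}\log P_{ij}$ over $\mathcal M$ is the same as maximising it over the parameter domain. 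Note that $\bar x = 1$ because $x\in\Delta^{I\times_M J}$.

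The key step is to substitute $\log P_{ij} = \log\lambda_{k,i} + \log\mu_{k,j} + \log h_k$ and regroup the resulting sum according to the block decomposition $I\times_M J = \coprod_{k\in M} I_k\times J_k$. Performing the inner summations converts the coefficients into exactly the aggregates appearing in the statement: $\sum_{j\in J_k} x_{ij} = u_i$, $\sum_{i\in I_k} x_{ij} = v_j$, and $\sum_{i\in I_k,\, j\in J_k} x_{ij} = u_{M,k} = v_{M,k} =: w_k$ (the two aggregates agree because $x$ is supported on the mapping product). This produces the separation
\[
\ell(P) = \sum_{k}\sum_{i\in I_k} u_i\log\lambda_{k,i} + \sum_k\sum_{j\in J_k} v_j\log\mu_{k,j} + \sum_k w_k\log h_k,
\]
in which the three parameter families $\{\lambda_{k,i}\}$, $\{\mu_{k,j}\}$, $\{h_k\}$ are constrained independently. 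I would then maximise the three sums separately: each is a weighted sum of logarithms over a probability simplex, so by Gibbs' inequality (equivalently, strict concavity on the simplex, or a Lagrange multiplier computation) the unique maximiser normalises the weights, giving $\lambda_{k,i} = u_i/w_k$, $\mu_{k,j} = v_j/w_k$, and $h_k = w_k$ (using $\sum_k w_k = \bar x = 1$). Substituting back yields
\[
\hat P_{ij} = \frac{u_i}{w_k}\cdot\frac{v_j}{w_k}\cdot w_k = \frac{u_i v_j}{w_k} = \frac{u_i v_j}{u_{M,k}\,\bar x} = \hat x_{ij},
\]
which is precisely the claimed estimator, and it lies in $\mathcal M$ by the very parametrisation used.

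The separation of the likelihood is the whole conceptual content of the argument; once it is in place the result is three copies of the textbook multinomial MLE. The one point requiring care—and the place I expect to have to be explicit—is behaviour at the boundary of the simplex, where some $w_k$, $u_i$, or $v_j$ vanish. Adopting the convention $0\log 0 = 0$, a block with $w_k = 0$ contributes nothing to $\ell$, forces $h_k = 0$, and leaves its $\lambda,\mu$ parameters unidentified but irrelevant; a vanishing $u_i$ or $v_j$ forces the matching coordinate of $\hat x$ to be zero. I would check that the stated formula is consistent with all these degenerate cases, so that $\hat x$ remains the (essentially unique) maximiser for every $x\in\Delta^{I\times_M J}$ and $\varphi\colon x\mapsto\hat x$ is a well-defined MLE for $\mathcal M$.
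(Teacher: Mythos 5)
Your proof is correct, but it proceeds by a genuinely different route than the paper. You maximise the log-likelihood directly: using the parametrisation $P_{ij}=\lambda_{k,i}\,\mu_{k,j}\,h_k$ of Equation~\eqref{saturated-markov}, you split $\ell(P)$ into three independent weighted-logarithm sums over simplices and solve each by Gibbs' inequality, obtaining $\lambda_{k,i}=u_i/w_k$, $\mu_{k,j}=v_j/w_k$, $h_k=w_k$ and hence $\hat x_{ij}=u_iv_j/(u_{M,k}\bar x)$. The paper instead never touches the likelihood function: it observes that $\varphi$ is a rational map with image $\mathcal M$ and invokes the criterion of~\cite{duarte-marigliano}, exhibiting an explicit Horn pair $(H,\lambda)$ --- a block-diagonal matrix built from the $I_k$ and $J_k$ indicator rows plus a row of $-1$'s --- and verifying Equation~\eqref{horn-pair} by direct computation. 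Your approach buys self-containedness and transparency: it needs only the textbook multinomial MLE, it shows uniqueness of the maximiser for interior data, and your closing discussion of the boundary blocks ($w_k=0$) makes explicit a degeneracy that the rational-map formulation leaves implicit (the formula is $0/0$ there and $\varphi$ is only defined for generic data). The paper's approach buys an algebraic certificate: verifying the Horn pair requires no optimisation at all, it situates $\mathcal M$ within the theory of models with rational MLE, and the Horn matrix itself is of independent structural interest. One small point of care in your version: when you pass from maximising over $\mathcal M$ to maximising over the parameter domain, the decoupling of the three constraint families is what justifies optimising each sum separately; you state this correctly, but it is the load-bearing step and deserves the explicit remark that the parameter ranges $\{\lambda_{k,\cdot}\}$, $\{\mu_{k,\cdot}\}$, $\{h_\cdot\}$ are unlinked product simplices, so the maximiser of the sum is the tuple of separate maximisers.
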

\begin{remark}
In our setup where $x$ is a distribution, the term $\overline x$ equals one and can be omitted. But if the $x_{ij}$ are given as intensities instead, then one must divide by the sum of the intensities, which gives the above formula with $\overline x$.
\end{remark}

\begin{proof}[Proof of Theorem~\ref{rational-mle}]
We have $\im(\varphi) = \mathcal M$ and the map $\varphi$ is rational in the entries of $x$. Therefore, we can use a general criterion to show that $\varphi$ is the MLE of the model it parametrizes \cite{duarte-marigliano}. Specifically, we need to exhibit $m\in \mathbb N$, a matrix $H$ of size $m\times |I\times_M J|$, and a vector $\lambda\in\mathbb R^{|I\times_M J|}$ such that the entries of each column of $H$ sum to zero and
\begin{equation}\label{horn-pair}
\varphi(x)_{ij} = \lambda_{ij}
\prod_{\ell=1}^m\left(\sum_{(i',j')\in I\times_M J} h_{\ell,(i',j')}x_{i'j'}\right)^{h_{\ell,(i,j)}}
\end{equation}
for all $x\in\Delta^{|I\times_M J|}$ and $(i,j)\in I\times_M J$. 

We define $\lambda = (1 \cdots 1)$ and $H$ as the direct sum of $|M|$ matrices $H_k$ augmented by a row of $-1$ entries.
For all $k\in M$, let $H_k$ be the matrix of size $(|I|+|J|+|\{k\}|)\times(|I_k \times J_k|)$ defined as follows, where $\delta_{a,b}$ is the Kronecker Delta:
\begin{align*}
(H_k)_{i, (i',j')} &= \delta_{i,i'},\\
(H_k)_{j, (i',j')} &= \delta_{j,j'},\\
(H_k)_{k, (i',j')} &= -1.
\end{align*}
For instance, if $I_k=\{1,2,3\}$, $J_k=\{1,2,3,4\}$, and we order $|I_k\times J_k|$ lexicographically, then
\[
H_k = \begin{pmatrix}
1&1&1&1& & & & & & & & \\
 & & & &1&1&1&1& & & & \\
 & & & & & & & &1&1&1&1\\
1& & & &1& & & &1& & & \\
 &1& & & &1& & & &1& & \\
 & &1& & & &1& & & &1& \\
 & & &1& & & &1& & & &1\\
-1&-1&-1&-1&-1&-1&-1&-1&-1&-1&-1&-1
\end{pmatrix}.
\]
Each column of $H$ splits into a column of $H_k$ (whose entries sum to $1$) for some $k$, and an entry of $-1$ due to the extra row. The sum of the entries of each column of $H$ is therefore zero.

We now analyze Equation~\eqref{horn-pair}. Denote the expression inside the parenthesis as $w_\ell$. Here, $\ell$ can be an element of $I$, $J$, or $M$, or $\ell=0$ to represent the additional row. For $i\in I$ we have
\[
w_i = \sum_{(i',j')\in I\times_M J} h_{i,(i',j')}x_{i',j'}
= \sum_{j'\in J} x_{ij'} = u_i.
\]
Similarly, we have $w_j=v_j$ for all $j\in J$. Because of the block diagonal form of the upper piece of $H$, for $k\in M$ we have
\[
w_k = \sum_{(i',j')\in I\times_M J} h_{k,(i',j')} = \sum_{(i',j')\in I_k\times J_k} -x_{i',j'} = -u_{M,k}.
\]
Finally, the additional row gives $w_0 = -\overline x$.

Analysing Equation~\ref{horn-pair} further, we see that for all $(i,j)\in I\times_M J$, the factor $w_\ell^{h_{\ell,(i,j)}}$ is $\neq 1$ in only four cases. If $\ell=i$ then this factor equals $u_i$; if $\ell=j$ then it equals $v_j$; if $\ell=k$ then it equals $(-u_{M,k})^{-1}$; and if $\ell=0$ then it equals $(-\overline x)^{-1}$. Therefore, the right-hand side of~\eqref{horn-pair} equals
\[
\frac{u_i v_j}{u_{M,k}\overline x},
\]
as required.
\end{proof}

\begin{remark}
The pair $(H,\lambda)$ is called the \emph{Horn pair} associated to $\varphi.$
\end{remark}
\color{black}

\section{Algorithms}\label{sec:algorithms}

We now detail algorithms to compute the distributions of meta-Markov combinations and to sample from these.

\newenvironment{algoblock}
{\setlength{\leftskip}{\parindent}}
{\setlength{\leftskip}{\parindent}}

\begin{algorithm}\label{algo:aggregates} Aggregates.
\smallskip

\begin{algoblock}
\noindent
\textbf{Input:} a parameter space $\Theta$, a category mapping $p:I\to M$, and a parametric model $f(\theta)=(f_i(\theta))_{i\in I}.$
\smallskip

\noindent
\textbf{Output:} the aggregate $f_M$ and the aggregate categories $I_k$, $k\in M$.

\noindent
\begin{enumerate}
\item for all $k\in M,$ set\\
$I_k \leftarrow \{i\in I\mid p(i)=k\}$,\\
$f_{M,k}(\theta) \leftarrow \sum_{i\in I_k}f_i(\theta)$.
\item \textbf{return} $f_{M}$, $(I_k)_{k\in M}$
\end{enumerate}
\end{algoblock}
\end{algorithm}

\color{black}

\begin{algorithm}\label{algo:meta-markov} Meta-Markov combinations.
\smallskip

\begin{algoblock}
\noindent
\textbf{Input:} a parameter space $\Theta$, category mappings $p:I\to M$ and $q:J\to M$, and parametric models $f(\theta)=(f_i(\theta))_{i\in I}$ and $g(\theta)=(g_j(\theta))_{j\in J}$.
\smallskip

\noindent
\textbf{Output:} the meta-Markov combination $f\star g$ if $f,g$ meta-consistent, an error otherwise.
\smallskip

\noindent
\begin{enumerate}
\item Compute $f_M$, $g_M$, $I_k$, $J_k$ $(k\in M)$ with Algorithm~\ref{algo:aggregates}.
\item for all $k\in M$,
if $f_{M,k}(\theta)\neq g_{M,k}(\theta)$ \textbf{return error}.
\color{black}
\item Set $I\times_M J \leftarrow \{(i,j,k) \mid k\in M, i\in I_k, j\in J_k\}$.
\item for $(i,j,k)\in I\times_M J$, set
\[
(f\star g)_{i,j,k}(\theta) \leftarrow \frac{f_i(\theta)g_j(\theta)}{f_{M,k}(\theta)}.
\]
\item \textbf{return} $(f\star g)(\theta)$
\end{enumerate}
\end{algoblock}

\end{algorithm}

\begin{algorithm}\label{algo:lower-restr} Restricted lower Markov combinations.
\smallskip

\begin{algoblock}
\noindent
\textbf{Input:} as in Algorithm~\ref{algo:meta-markov}.
\smallskip

\noindent
\textbf{Output:} the restricted lower Markov combination $(f\lowerstar g)(\theta)$.
\smallskip

\noindent
\begin{enumerate}
\item as in Algorithm~\ref{algo:meta-markov}.
\item solve the following system of equations for $\theta$:
\[
f_{M,k}(\theta) = g_{M,k}(\theta) \text{ for all } k\in M\qquad (\ast).
\]
\item set $\Theta'\leftarrow\{\theta\in\Theta\mid (\ast)\}$.
\item \textbf{return} the output of Algorithm~\ref{algo:meta-markov} called with the parameter space $\Theta'$.
\end{enumerate}
\end{algoblock}

\end{algorithm}

\color{black}

\begin{algorithm}\label{algo:sampling} Sampling from a Meta-Markov combination.
\smallskip

\begin{algoblock}
\noindent
\textbf{Input:} as in Algorithm~\ref{algo:meta-markov}, assuming $f$ and $g$ are meta-consistent; $\theta\in\Theta$.
\smallskip

\noindent
\textbf{Output:} A sample $(i,j)$ of $(f\star g)(\theta)$.
\smallskip

\noindent
\begin{enumerate}
\item Sample $i\in I$ with probability $f_i(\theta)$.
\item Set $J_{p(i)}\leftarrow \{j\in J\mid q(j) = p(i)\}$.
\item Sample $j\in J_{p(i)}$ with probability $g_j(\theta)/g_{M,p(i)}(\theta)$.
\item \textbf{return} $(i,j)$.
\end{enumerate}
\end{algoblock}
\end{algorithm}

\begin{algorithm} Sampling from a structured Super-Markov combination.
\smallskip

\begin{algoblock}
\noindent
\textbf{Input:} as in Algorithm~\ref{algo:meta-markov}, with the addition of a parametric model $h(\theta)=(h_i(\theta))_{k\in M}.$ and a parameter $\theta\in\Theta$.
\smallskip

\noindent
\textbf{Output:} A sample $(i,j)$ of $(f\otimes_h g)(\theta)$.
\smallskip

\noindent
\begin{enumerate}
\item Sample $k\in M$ with probability $h_k$.
\item Set $I_k\leftarrow\{i\in I\mid p(i)=k\}$.
\item Sample $i\in I_k$ with probability $f_i(\theta)/f_{M_k}(\theta)$.
\item Continue with steps (2)--(4) of Algorithm~\ref{algo:sampling}.
\end{enumerate}
\end{algoblock}
\end{algorithm}

\section{Discussion}\label{sec:discussion}

Markov combinations provide a general way to combine discrete statistical models. The different available variants, and the choice of meta-categorisation, provide a flexible way to adapt Markov combinations to practicioners' needs. This flexibility may come at a cost, however: the meta-categorisation needs to be carefully chosen, and to fit the problem at hand.

A way to gain intuition on which meta-categorisations are useful would be to examine Markov combinations in more classes of models. We see the possibility of bringing our methods to classes such as relational models~\cite{relational-models} and undirected probabilistic graphical models for discrete variables. The case of DAG graphical models, i.e.\ Bayesian networks, is already covered by staged trees. We reserve these considerations to future work.

Other possible theoretical considerations for future work are the study of moments of discrete Markov combinations and the estimation thereof, a study of partial likelihood estimation or profile likelihood estimation, a study of maximum likelihood estimators for more types of Markov-combined models, and the extension of Markov combinations to discrete but not necessarily finitely-supported models.

\section*{Acknowledgements}
ER and OM were supported by the European Research Executive Agency (Project no.\ 101061315–MIAS–HORIZON-MSCA-2021-PF-01). 
ER acknowledges the financial support from the “Hub Life Science - Digital Health (LSHDH) PNC-E3- 2022-23683267 - Progetto DHEAL-COM ”, funded by the Italian Ministry of Health within the Piano Nazionale Complementare for the “PNRR Ecosistema Innovativo della Salute”.  
This work was also partially supported by the MIUR Excellence Department Project awarded to the Department of Mathematics, University of Genoa.

\bibliographystyle{plain}
\bibliography{refs}

\end{document}